\journal{Journal of Computational Physics}
\theoremstyle{plain}
  \newtheorem{thm}{Theorem}
  \newtheorem{lem}[thm]{Lemma}
\renewcommand{\vec}[1]{\underline{#1}}
\newcommand{\mat}[1]{\underline{\underline{#1}}\,}
\DeclarePairedDelimiter{\diagfences}{(}{)}
\newcommand{\diag}{\operatorname{diag}\diagfences}
\begin{document}
 
\renewcommand{\today}{January 30, 2016}

\begin{frontmatter}
\title{Summation-by-parts operators for correction procedure via reconstruction}

\author[myu]{Hendrik Ranocha\corref{cor1}}
\ead{h.ranocha@tu-bs.de}

\author[myu]{Philipp \"Offner}
\author[myu]{Thomas Sonar}

\cortext[cor1]{Corresponding author}

\address[myu]{Technische Universität Braunschweig, Institut Computational Mathematics, Pockelsstra\ss e 14, D-38106 Braunschweig, Germany }

\begin{abstract}
  The correction procedure via reconstruction (CPR, formerly known as flux
  reconstruction) is a framework of high order methods for conservation laws,
  unifying some discontinuous Galerkin, spectral difference and spectral volume
  methods. Linearly stable schemes were presented by Vincent et al. (2011, 2015),
  but proofs of non-linear (entropy) stability in this framework have not been
  published yet (to the knowledge of the authors).
  We reformulate CPR methods using summation-by-parts (SBP) operators with
  simultaneous approximation terms (SATs), a framework popular for finite
  difference methods, extending the results obtained by Gassner (2013) for
  a special discontinuous Galerkin spectral element method. This reformulation
  leads to proofs of conservation and stability in discrete norms associated
  with the method, recovering the linearly stable CPR schemes of Vincent et al.
  (2011, 2015). Additionally, extending the skew-symmetric formulation of
  conservation laws by additional correction terms, entropy stability for
  Burgers' equation is proved for general SBP CPR methods not including
  boundary nodes.
\end{abstract}

\begin{keyword}
  hyperbolic conservation laws
  \sep high order methods
  \sep summation-by-parts 
  \sep flux reconstruction 
  \sep lifting collocation penalty 
  \sep correction procedure via reconstruction 
\end{keyword}

\end{frontmatter}

\section{Introduction}

In the field of computational fluid dynamics (CFD), low-order methods are generally
robust and reliable and therefore employed in practical calculations. 
The main advantage of high-order methods towards low-order ones is the possibility
of considerably more accurate solutions with the same computing cost, but
unfortunately they are less robust and more complicated. In recent years many
researchers focus on this topic. There has been a surge of research activities
to improve and refine high-order methods as well as to develop new ones with more
favourable properties.

We consider in this paper the \emph{correction procedure via reconstruction}
(CPR) method using \emph{summation-by-parts }(SBP) operators. The CPR combines  
the \emph{flux reconstruction} (FR) approach  developed by
Huynh \cite{huynh2007flux} and the \emph{lifting collocation penalty} (LCP) by
Wang and Gao \cite{wang2009unifying}.

Huynh \cite{huynh2007flux} introduced the FR approach to high-order spectral
methods for conservation laws in one space dimension and its extension to
multiple dimensions using tensor products in 2007. For the case of one spatial
dimension, the ansatz amounts to evaluating the derivative of a discontinuous
piecewise polynomial function by using its straightforward derivative estimate
together with a correction term. Wang and Gao \cite{wang2009unifying}
generalised the FR approach in 2009 to lifting collocation penalty  methods on
triangular grids. Later, the authors involved in the construction of these
methods combined the names in the unifying framework of
\emph{correction procedure via reconstruction} (CPR) methods, see
\cite{huynh2014high}. The CPR creates a framework unifying several high-order
methods such as \emph{discontinuous Galerkin} (DG), \emph{spectral difference}
(SD) and \emph{spectral volume} (SV) methods. These connections were already
pointed out and investigated in more detail in
\cite{allaneau2011connections, degrazia2014connections, yu2013connection}.

A linear (von Neumann) stability analysis of FR schemes was carried
out already by Huynh \cite{huynh2007flux} and in extended form by Vincent,
Castonguay and Jameson \cite{vincent2011insights}. A one-parameter family of
linearly stable schemes in one dimension was discovered by the same authors
\cite{vincent2011newclass} using an energy method and extended in 2015 to
multiple-parameter families \cite{vincent2015extended}. Extensions of the
one-parameter family to advection-diffusion problems and triangular grids were
published by the same groups \cite{castonguay2012newclass, castonguay2013energy, 
williams2013energy}.

The analysis of nonlinear stability for CPR methods is far more complex and
not as advanced as in the linear case. First results  are available in
\cite{jameson2012nonlinear, witherden2014analysis, witherden2015identification}. 

The application of \emph{summation-by-parts} (SBP)  operators in the CPR framework
supplies a new perspective here.

In the context of \emph{finite difference} (FD) methods, summation-by-parts
operators with \emph{simultaneous approximation terms} (SATs) provide
a suitable way to derive stable schemes in a multi-block fashion enforcing
boundary conditions in a weak way. They enable the imitation of manipulations
of the continuous problem for the discrete method and are thus able to
translate results like well-posedness to its discrete counterpart stability.
Review articles from Svärd and Nordström \cite{svard2014review} and Del Rey
Fernández, Hicken, and Zingg \cite{fernandez2014review} provide an insight
in the development over the last decades and recent results.
There is a strong connection of SBP operators with both skew-symmetric
formulations of conservation laws as a means to prove conservation and stability
\cite{fisher2013discretely}, and quadrature rules \cite{hicken2013summation}.
Recently, Gassner et al. applied the SBP SAT framework to a particular
\emph{discontinuous Galerkin spectral element method} (DGSEM) to prove
stability and discrete conservation for different systems of conservation laws,
see inter alia \cite{gassner2013skew, gassner2014kinetic, kopriva2014energy,
gassner2016well}.
Another extension of SBP operators has been presented by Del Rey
Fernández et al. \cite{fernandez2014generalized}, based on a numerical setting
and allowing general operators, connected with quadrature rules.

Here, we use the SBP framework in the general CPR setting. We are able to
demonstrate all well-known properties, which have already been proven,
but  we can further extend the CPR method and show conservation and stability
in a nonlinear case, see section \ref{Chapter_4}.

The paper is organised as follows. The SBP and CPR frameworks will be briefly
explained in section \ref{Chapter_2}. In the next section, we apply SBP
operators in CPR methods and revisit the results of Vincent et al.
\cite{vincent2011newclass, vincent2015extended} for constant velocity linear
advection.

In section \ref{Chapter_4}, we focus on Burgers' equation and prove both discrete
conservation and stability for a skew-symmetric formulation and Lobatto-Legendre
nodes, revisiting the results of \cite{gassner2013skew}.

Additionally, we suggest a generalisation of the CPR method to get stability for
a general SBP basis, extending the skew-symmetric formulation and being both
provably stable and conservative. Numerical test cases are used to confirm
the theoretical results. Finally, we discuss open problems and give an
outlook on future work.

\section{Existing formulations for SBP operators and CPR methods}\label{Chapter_2}

Both \emph{finite difference} (FD) SBP methods and CPR schemes are designed as
semidiscretisations of hyperbolic conservation laws
\begin{equation}
  \partial_t u + \partial_x f(u) = 0,
\label{eq:conservation-law}
\end{equation}
equipped with appropriate initial and boundary conditions.

\subsection{SBP schemes}

Traditionally, SBP operators are used in the FD framework. In one space
dimension, a set of nodes including both boundary points of the element are
used to represent the solution values. Extensions to multiple dimensions are
performed via tensor products. To compute the semidiscretisation of
\eqref{eq:conservation-law}, $f(u)$ is evaluated at each node and a difference
operator is applied. The notation using vectors $\vec{u}$ for the solution
values and the differentiation matrix $\mat{D}$ is very common and results in
a finite difference approximation $\mat{D} \vec{f}$ of $\partial_x f$.

In order to be an SBP operator, the derivative matrix needs to be written as
$D = P^{-1} Q$, $Q + Q^T = B = \diag{-1, 0, \dots, 0, 1}$, where $P$ is a
symmetric and positive definite matrix with associated norm
$\norm{\vec{u}}^2_P = \vec{u}^T \mat{P} \vec{u}$, approximating the $L^2$ norm,
see inter alia the review \cite{svard2014review} and references cited therein.
Boundary (both of the computational domain and between blocks) conditions
are imposed weakly, using a \emph{simultaneous-approximation-term} (SAT)
formulation (see inter alia \cite{fisher2013discretely}),
involving differences of desired and given values at boundary points.
Thus, the SBP CPR methods described in the next chapter extend these schemes.

\subsection{CPR methods}

The FR approach in one space dimension described by Huynh \cite{huynh2007flux}
uses a nodal polynomial basis of order $p$ in the standard element $[-1, 1]$.
All elements are mapped to this standard element and the computations are
performed there. Extensions to multiple dimensions are performed via tensor
products. The semidiscretisation of \eqref{eq:conservation-law} (i.e.
the computation of $\partial_x f(u)$) consists of the following steps, see also
the review \cite{huynh2014high} and references cited therein:
\begin{itemize}
  \item
    Interpolate the solution to the cell boundaries at $-1$ and $1$ (if these
    values are not already given as coefficients of the nodal basis).
  
  \item
    Compute common numerical fluxes $f^{num}$ at each cell boundary.
  
  \item
    Compute the flux $f(u)$ pointwise in each node.
    
  \item
    Interpolate the flux $f(u)$ to the boundary and add polynomial correction
    functions $g_L$, $g_R$ of degree $p+1$, multiplied by the difference of
    $f_{L/R} - f^{num}_{L/R}$ of the flux and the numerical flux at the
    corresponding boundary.
    
  \item
    Finally, compute the resulting derivative of
    $f + (f_{L} - f^{num}_{L}) g_L + (f_{R} - f^{num}_{R}) g_R$,
    using exact differentiation for the polynomial basis.
\end{itemize}

Wang and Gao \cite[equation (3.14)]{wang2009unifying} formulated the
\emph{lifting collocation penalty} (LCP) approach for the semidiscretisation
of \eqref{eq:conservation-law} on triangles as the exact derivative of the
flux $f(u)$ computed as above by pointwise evaluation at the nodes plus
additional correction terms in the form of a linear combination of the
differences between a common numerical flux and the flux $f$ at points on the
boundary of the cell.

Since the approaches are similar and can be reformulated in the other way,
the common name \emph{correction procedure via reconstruction} (CPR) was
used for both schemes. In the next section, the involved linear combination of
penalty terms at the boundary is rewritten in another way, allowing more
abstractions and compact formulations.

\section{CPR methods using SBP operators: Linear advection }\label{Chapter_3}

This chapter focuses on a new formulation of CPR methods with special
attention paid to SBP operators. Additionally, constant velocity linear
advection is used as a test case to investigate linear stability and
conservation properties of the schemes.

\subsection{The one dimensional setting}

After mapping each element to the standard element $[-1,1]$,
a CPR method can be formulated as
\begin{equation}
  \partial_t \vec{u} + \mat{D} \vec{f} + \mat{C} ( \vec{f}^{num} - \mat{R} \vec{f}) = 0.
\label{eq:CPR}
\end{equation}
Here, $\vec{u}, \vec{f}$ are the finite dimensional representation of $u$,
$f(u)$ in the standard element and $\vec{f}^{num}$ is the representation of
the numerical flux on the boundary. The linear operators representing differentiation
and restriction (interpolation) to the boundary of the standard element are
represented via the matrices $\mat{D}$ and $\mat{R}$, respectively. Other parameters
of the correction operator are encoded in the correction matrix $\mat{C}$. Thus,
for a given standard element, a CPR method is parameterized by
\begin{itemize}
  \item 
  A basis $\mathcal{B}$ for the local expansion, determining the derivative and
  restriction (interpolation) matrices $\mat{D}$ and $\mat{R}$.
  
  \item
  A correction matrix $\mat{C}$, adapted to the chosen basis.
\end{itemize}

For the representation of an SBP operator, the basis $\mathcal{B}$ has to be
associated with a (volume) quadrature rule, given by nodes $z_0, \dots, z_p$ and
appropriate positive weights $\omega_0, \dots, \omega_p$. The values of $u$ at
the nodes are the coefficients of the local expansion, i.e.
$\vec{u} = (u(z_0), \dots, u(z_p))^T$. The quadrature weights determine a
positive definite Matrix $\mat{M} = \diag{\omega_0, \dots, \omega_p}$ associated
with a (discrete) norm $\norm{u}_M^2 = \vec{u}^T \mat{M} \vec{u}$.
Besides the volume quadrature rule, there must be a quadrature rule for the
boundary, approximating the outward flux through the boundary as in the
divergence theorem. In the present one dimensional setting, this quadrature rule
is simply given by exact evaluation at the endpoints $\eval[0]{\cdot}_{-1}^{1}$.
The basis and it's associated quadrature rules must satisfy the SBP property
\begin{equation}
  \mat{M} \mat{D} + \mat{D}^T \mat{M} = \mat{R}^T \mat{B} \mat{R},
\label{eq:SBP-MRBD}
\end{equation}
in order to mimic integration by parts on a discrete level
\begin{equation}
\begin{aligned}
  & \vec{u}^T \mat{M} \mat{D} \vec{v} + (\mat{D} \vec{u})^T \mat{M} \vec{v}
\\& \approx \int_{-1}^{1} u \, \partial_x v \dif x + \int_{-1}^{1} \partial_x u \, v \dif x
    =
    \eval[2]{u \, v}_{-1}^{1}
\\& \approx (\mat{R} \vec{u})^T \mat{B} (\mat{R} \vec{u}).
\end{aligned}
\label{eq:SBP-IBP-MRBD}
\end{equation}

As an example, consider Gauß-Lobatto-Legendre integration with its associated
basis of point values at Lobatto nodes in $(-1, 1)$. Then, the restriction and
boundary integral matrices reduce to
\begin{equation}
  \mat{R} = \begin{pmatrix}
               1 & 0 & \dots & 0\\
               0 & \dots & 0 & 1
            \end{pmatrix}
  ,\quad
  \mat{B} = \begin{pmatrix}
              -1 & 0\\
               0 & 1
            \end{pmatrix}.
\end{equation}
Using the special choice $\mat{C} = \mat{M}^{-1} \mat{R}^T \mat{B}$ and defining
$\mat{\widetilde{B}} := \mat{R}^T \mat{B} \mat{R}$, i.e.
$\mat{\widetilde{B}} = \diag{-1, 0, \dots, 0, 1}$,
the CPR method of equation \eqref{eq:CPR} reduces to
\begin{equation}
  \partial_t \vec{u} + \mat{D} \vec{f}
  + \mat{M}^{-1} \mat{\widetilde{B}} ( \vec{\widetilde{f}}^{num} - \vec{f}) = 0,
\label{eq:DGSEM}
\end{equation}
where $\vec{\widetilde{f}}^{num} = (f^{num}_L, 0, \dots, 0, f^{num}_R)$ contains
the numerical flux at the left and right boundary and satisfies
$\vec{f}^{num} = \mat{R} \vec{\widetilde{f}}^{num}$. Equation \eqref{eq:DGSEM}
is the strong form of the DGSEM formulation of Gassner \cite{gassner2013skew},
which he proved to be a diagonal norm SBP operator.

\subsection{Conservation}

Consider now a CPR method given by a nodal basis of polynomials of degree $\leq p$
and an associated (symmetric) quadrature rule that is exact for polynomials of
degree $\leq 2p - 1$, for example Gauß-Legendre or Gauß-Lobatto-Legendre quadrature.
Then, due to exact integration of polynomials of the form $u \, \partial_x v$,
where $u, v$ are polynomials of degree $\leq p$, the SBP property \eqref{eq:SBP-MRBD}
automatically holds, see also \cite{kopriva2010quadrature}.
Let $\vec{1}$ denote the representation of the constant function $x \mapsto 1$
in the chosen basis, i. e. $\vec{1} = (1, \dots, 1)^T$ for a nodal polynomial basis.

In order to investigate conservation properties in the continuous setting,
the function $u$ is multiplied with the constant function $x \mapsto 1$ and
integrated over the interval $(a, b)$, resulting in
\begin{equation}
  \od{}{t} \int_{a}^{b} u \dif x
  = - \int_{a}^{b} \partial_x f(u) \dif x
  = \eval[2]{f(u)}_{a}^{b}
  = f_R - f_L.
\end{equation}
Mimicking this derivation in the semidiscrete setting (in the standard element) leads to
\begin{equation}
  \od{}{t} \int_{-1}^{1} u \dif x
  = \od{}{t} \vec{1}^T \mat{M} \vec{u}
  = - \vec{1}^T \mat{M} \left( \mat{D} \vec{f} + \mat{C} ( \vec{f}^{num} - \mat{R} \vec{f}) \right).
\end{equation}
Using the SBP property \eqref{eq:SBP-MRBD} results in
\begin{equation}
  \od{}{t} \vec{1}^T \mat{M} \vec{u}
  =   \vec{1}^T \mat{D}^T \mat{M} \vec{f}
    - \vec{1}^T \mat{R}^T \mat{B} \mat{R} \vec{f}
    - \vec{1}^T \mat{M} \mat{C} ( \vec{f}^{num} - \mat{R} \vec{f}).
\end{equation}
Since discrete differentiation is exact for polynomials of degree $\leq p$
and especially for constant functions, $\mat{D} \vec{1} = 0$, we get
\begin{equation}
  \od{}{t} \vec{1}^T \mat{M} \vec{u}
  = - ( \vec{1}^T \mat{R}^T \mat{B} - \vec{1}^T \mat{M} \mat{C} ) \mat{R} \vec{f}
    - \vec{1}^T \mat{M} \mat{C} \vec{f}^{num}.
\label{eq:CPR-conservation-derivation}
\end{equation}

\begin{lem}
  If the assumptions of this subsection are complied with and the correction operator
  of the CPR method satisfies
  $\vec{1}^T \mat{M} \mat{C} = \vec{1}^T \mat{R}^T \mat{B}$,
  then the scheme is conservative (across elements).
\label{lem:CPR-conservation}
\end{lem}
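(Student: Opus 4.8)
The plan is to continue directly from equation~\eqref{eq:CPR-conservation-derivation}, which has already exploited $\mat{D}\vec{1} = \vec{0}$ and the SBP property~\eqref{eq:SBP-MRBD}, and to substitute the hypothesis $\vec{1}^T \mat{M} \mat{C} = \vec{1}^T \mat{R}^T \mat{B}$. This identity is tailored precisely so that the factor $\vec{1}^T \mat{R}^T \mat{B} - \vec{1}^T \mat{M} \mat{C}$ multiplying $\mat{R}\vec{f}$ in \eqref{eq:CPR-conservation-derivation} vanishes; consequently every dependence of $\od{}{t}\vec{1}^T\mat{M}\vec{u}$ on the interior flux values $\vec{f}$ drops out and one is left with the reduced local balance
\[
  \od{}{t} \vec{1}^T \mat{M} \vec{u}
  = - \vec{1}^T \mat{M} \mat{C}\, \vec{f}^{num}
  = - \vec{1}^T \mat{R}^T \mat{B}\, \vec{f}^{num}.
\]

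Next I would evaluate the right-hand side explicitly. Since the restriction matrix $\mat{R}$ reproduces the constant function exactly at the two endpoints of the standard element, $\mat{R}\vec{1} = (1,1)^T$, and $\mat{B} = \diag{-1,1}$, the product collapses to $\vec{1}^T \mat{R}^T \mat{B}\, \vec{f}^{num} = f^{num}_R - f^{num}_L$, where $f^{num}_L$ and $f^{num}_R$ denote the common numerical fluxes at the left and right boundary of the element. Hence $\od{}{t}\vec{1}^T\mat{M}\vec{u} = f^{num}_L - f^{num}_R$: the discrete integral of $u$ over the element changes only through the net common numerical flux across its boundary, which is the exact semidiscrete analogue of the continuous balance law with the single-valued numerical flux in place of $f$.

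To promote this per-element identity to conservation \emph{across} elements, I would sum the reduced balance over all elements of the mesh. At each interior interface the common numerical flux is single-valued by construction, and the sign pattern of $\mat{B}$ makes it enter the balances of the two neighbouring elements with opposite signs --- as an outflow for the element on its left and as an inflow for the element on its right --- so the interior contributions telescope to zero; what remains is $\od{}{t}\sum_{\text{elements}}\vec{1}^T\mat{M}\vec{u}$ equal to the numerical fluxes at the two physical boundaries of the domain alone, which is exactly the asserted (Lax--Wendroff-type) discrete conservation. The algebra is routine once \eqref{eq:CPR-conservation-derivation} is available, so I expect no genuine obstacle; the only step that deserves care is this global summation, which relies on both the uniqueness of the numerical flux at interfaces and the exactness of $\mat{R}$ on constants (so that the interface coefficients come out as exactly $\pm1$). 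The hypothesis $\vec{1}^T\mat{M}\mat{C} = \vec{1}^T\mat{R}^T\mat{B}$ itself enters only to make the non-telescoping term proportional to $\mat{R}\vec{f}$ disappear.
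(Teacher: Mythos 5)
Your proposal is correct and follows essentially the same route as the paper's proof: insert the hypothesis into \eqref{eq:CPR-conservation-derivation} so the $\mat{R}\vec{f}$ term drops, evaluate the remaining boundary term exactly, and telescope the single-valued numerical fluxes over element interfaces. (Your sign $f^{num}_L - f^{num}_R$ is in fact the consistent one; the paper's displayed $f^{num}_R - f^{num}_L$ is a harmless sign slip that does not affect the argument.)
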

\begin{proof}
  Inserting the condition into \eqref{eq:CPR-conservation-derivation} gives
  \begin{equation}
    \od{}{t} \vec{1}^T \mat{M} \vec{u}
    = - \vec{1}^T \mat{R}^T \mat{B} \vec{f}^{num}
    = f^{num}_R - f^{num}_L,
  \end{equation}
  due to exact evaluation of the boundary integral for polynomials of degree
  $\leq p$. Summing up the contributions of all elements and bearing in mind
  that the numerical flux at the boundary point between two adjacent elements
  is the same for both, biased only by a factor of $-1$ for one element but
  not for the other, results in the global equality
  \begin{equation}
    \od{}{t} \int_{a}^{b} u \dif x
    = \vec{1}^T \mat{M} \vec{u}
    = f_R - f_L
  \end{equation}
  also for the numerical scheme.
\end{proof}
Assuming periodic boundary conditions therefore leads to global conservation 
\begin{equation}
  \od{}{t} \int_{a}^{b} u \dif x = \od{}{t} \vec{1}^T \mat{M} \vec{u} = 0.
\end{equation}

Lemma \ref{lem:CPR-conservation} proofs conservation across elements.
On a sub-element level, conservation for diagonal-norm SBP operators
(including boundary nodes) has been proven in \cite{fisher2013discretely}
in the context of the Lax-Wendroff theorem.

\subsection{Linear stability}

Specializing on a certain type of flux, namely the flux of linear advection with
constant velocity $1$, the conservation law reduces to
\begin{equation}
  \partial_t u + \partial_x u = 0.
\label{eq:linear-constant-advection}
\end{equation}
In the continuous setting, proving stability with respect to the $L^2$ norm is
simply an application of integration by parts. Multiplying equation
\eqref{eq:linear-constant-advection} by the solution $u$ and integrating over
the domain leads to
\begin{equation}
  \int u \, \partial_t u
  = - \int u \, \partial_x u
  = - \eval[2]{\frac{1}{2} u^2} + \int \partial_x u \, u.
\end{equation}
Summing up the first and the last equality results in
\begin{equation}
  \od{}{t} \norm{u}_{L^2}^2
  = - \eval[2]{\frac{1}{2} u^2},
\end{equation}
allowing an estimate of the solution's norm in terms of the initial and
boundary conditions, i.e. \emph{well-posedness}.
Assuming compact support or periodic boundary conditions simplifies the estimate to
$\od{}{t} \norm{u}_{L^2}^2 = 0$.

Mimicking this manipulations in the discrete setting of an SBP CPR method reads as
\begin{equation}
  \int u \, \partial_t u
  \approx \vec{u}^T \mat{M} \od{}{t} \vec{u}
  = - \vec{u}^T \mat{M} \left( \mat{D} \vec{u} + \mat{C} ( \vec{f}^{num} - \mat{R} \vec{u}) \right).
\end{equation}
Applying the SBP property \eqref{eq:SBP-MRBD} as in the previous section results in
\begin{equation}
  \vec{u}^T \mat{M} \od{}{t} \vec{u}
  = \vec{u}^T \mat{D}^T \mat{M} \vec{u} - \vec{u}^T \mat{R}^T \mat{B} \mat{R} \vec{u}
    - \vec{u}^T \mat{M} \mat{C} ( \vec{f}^{num} - \mat{R} \vec{u}).
\end{equation}
Summing up these equations and using the symmetry of the scalar product induced
by $\mat{M}$ yields
\begin{equation}
  \od{}{t} \norm{u}_M^2
  = - \vec{u}^T \mat{R}^T \mat{B} \mat{R} \vec{u}
    - 2 \vec{u}^T \mat{M} \mat{C} ( \vec{f}^{num} - \mat{R} \vec{u}).
\end{equation}
Assuming now the special form
$\mat{C} = \mat{M}^{-1} \mat{R}^T \mat B$
simplifies the last equation to
\begin{equation}
\begin{aligned}
  \od{}{t} \norm{u}_M^2
  & = \vec{u}^T \mat{R}^T \mat{B} \mat{R} \vec{u} 
      - 2 \vec{u}^T \mat{R}^T \mat B \vec{f}^{num}
\\& = \vec{u}^T \mat{R}^T \mat{B} ( \mat{R} \vec{u} - 2 \vec{f}^{num}).
\end{aligned}
\end{equation}
Due to exact evaluation of the boundary terms for $\vec{u}$,
representing a polynomial of degree $\leq p$, this can be written as
\begin{equation}
  \od{}{t} \norm{u}_M^2
  = u_R ( u_R - 2 f^{num}_R) - u_L ( u_L - 2 f^{num}_L),
\end{equation}
where the indices $R$ and $L$ indicate values at the right and left boundary,
respectively.
Therefore, a similar estimate of the norm of the numerical solution in terms of
boundary data and the numerical flux is possible. Assuming again periodic boundary
conditions or compact support reduces the global rate of change to a sum of local
contributions of the form $u_- (u_- - 2 f^{num}) - u_+ (u_+ - 2 f^{num})$,
where $f^{num}$ is the common numerical flux and $u_-$ is the value $u_R$ on the
right boundary of the left element and $u_+$ is appropriately defined.
Using a standard numerical flux of the form
\begin{equation}
  f^{num}(u_-, u_+)
  = \frac{u_+ + u_-}{2} - \alpha (u_+ - u_-),
\label{eq:num-flux-linear-constant-advection}
\end{equation}
recovering a central scheme for $\alpha = 0$ and a fully upwind scheme for
$\alpha = 1$, yields
\begin{equation}
\begin{aligned}
  &u_- (u_- - 2 f^{num}) - u_+ (u_+ - 2 f^{num})
\\&= u_-^2 - u_+^2 - u_-(u_- + u_+) + 2 \alpha u_-(u_+ - u_-)
\\&\quad                + u_+(u_- + u_+) - 2 \alpha u_+(u_+ - u_-)
\\&= - 2 \alpha (u_+ - u_-)^2.
\end{aligned}
\end{equation}
Thus, $\alpha \geq 0$ ensures $\od{}{t} \norm{u}_M \leq 0$ and therefore
\emph{stability}, the discrete analogue to well-posedness.

The basic idea of Jameson \cite{jameson2010proof} to show linear stability is using the
equivalence of norms in finite dimensional vector spaces and showing stability
not in a regular $L^2$ norm, but a kind of Sobolev norm involving derivatives,
as also explained in \cite{allaneau2011connections} and used in
\cite{vincent2011newclass, vincent2015extended} to derive linearly stable
FR methods.
Although the ansatz here is very different, some calculations
are similar and in the end, the same schemes will be derived. The difference is,
that Vincent et al. \cite{vincent2015extended} used continuous integral norms for their
derivations whereas this setting uses fully discrete norms adapted to the solution
point coordinates. Therefore, they could not recognize any influence of the solution
points on the stability properties in the linear case. For the nonlinear case,
the influence of these nodes was stressed in \cite{jameson2012nonlinear}.

Following these ideas, stability is investigated for a discrete norm given by
$\mat{M} + \mat{K}$, where $\mat{M}$ is the matrix associated as usual with the
quadrature rule given by the polynomial basis and $\mat{K}$ is a symmetric matrix
satisfying $\mat{M} + \mat{K} > 0$, i.e. positive definite. Then, the rate of change of
the discrete norm $\norm{u}_{M+K}^2 = \vec{u}^T (\mat{M} + \mat{K}) \vec{u}$ can
be computed as
\begin{equation}
  \vec{u}^T (\mat{M} + \mat{K}) \od{}{t} \vec{u}
  = - \vec{u}^T (\mat{M} + \mat{K})
       \left( \mat{D} \vec{u} + \mat{C} ( \vec{f}^{num} - \mat{R} \vec{u}) \right),
\end{equation}
which can also be written as
\begin{equation}
\begin{aligned}
  \vec{u}^T (\mat{M} + \mat{K}) \od{}{t} \vec{u}
  = &- \vec{u}^T \mat{K} \mat{D} \vec{u}
     - \vec{u}^T (\mat{M} + \mat{K}) \mat{C} ( \vec{f}^{num} - \mat{R} \vec{u})
\\&  + \vec{u}^T \mat{D}^T \mat{M} \vec{u}
     - \vec{u}^T \mat{R}^T \mat{B} \mat{R} \vec{u},       
\end{aligned}
\end{equation}
due to the SBP property \eqref{eq:SBP-MRBD}. Again, adding the last two
equations yields
\begin{equation}
  \begin{aligned}
  & \od{}{t} \norm{u}_{M+K}^2
\\& = - 2 \vec{u}^T \mat{K} \mat{D} \vec{u}
      - 2 \vec{u}^T (\mat{M} + \mat{K}) \mat{C} ( \vec{f}^{num} - \mat{R} \vec{u})
\\&\quad - \vec{u}^T \mat{R}^T \mat{B} \mat{R} \vec{u}.  
\end{aligned}
\end{equation}
The last term contains only boundary values and is thus unproblematic. The second
term can be rendered as a boundary term by enforcing the correction matrix to
be $\mat{C} = (\mat{M} + \mat{K})^{-1} \mat{R}^T \mat{B}$, analogously to the
previous procedure. Then, the only term remaining to be estimated is the first one.

In the following sections, the multiple parameter family of FR methods of
Vincent et al. \cite{vincent2015extended} will be reconsidered using the view of SBP operators.
These parameters force the first term to vanish, because $\mat{K} \mat{D}$ is
chosen to be antisymmetric. Then, using $\mat{C} = (\mat{M} + \mat{K})^{-1} \mat{R}^T \mat{B}$,
the last equation can be written as
\begin{equation}
\begin{aligned}
  \od{}{t} \norm{u}_{M+K}^2
  & = - 2 \vec{u}^T \mat{R}^T \mat{B} ( \vec{f}^{num} - \mat{R} \vec{u})
      - \vec{u}^T \mat{R}^T \mat{B} \mat{R} \vec{u}
\\& = \vec{u}^T \mat{R}^T \mat{B} ( \mat{R} \vec{u} - 2 \vec{f}^{num}),
\end{aligned}
\end{equation}
allowing the same estimates as before, leading to linear stability. This proves
the following
\begin{lem}[see also {\cite[Thm.1 ]{vincent2015extended}}]
\label{lem:CPR-linearly-stable}
  If the SBP CPR method is given by
  $\mat{C} = (\mat{M} + \mat{K})^{-1} \mat{R}^T \mat{B}$, where
  $\mat{M} + \mat{K}$ is positive definite and $\mat{K} \mat{D}$ is antisymmetric,
  then the method is linearly stable in the discrete norm $\norm{\cdot}_{M+K}$
  induced by $\mat{M} + \mat{K}$.
\end{lem}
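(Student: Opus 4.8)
The plan is to reproduce, at the discrete level, the $L^2$-energy argument already carried out for the plain norm $\norm{\cdot}_M$, but now measuring the solution in the modified inner product associated with $\mat{M}+\mat{K}$, which the preceding paragraphs have already set up. For the linear advection flux $\vec{f}=\vec{u}$ I would first multiply the semidiscretisation \eqref{eq:CPR} from the left by $\vec{u}^T(\mat{M}+\mat{K})$, split $(\mat{M}+\mat{K})\mat{D}=\mat{M}\mat{D}+\mat{K}\mat{D}$, apply the SBP property \eqref{eq:SBP-MRBD} to the $\mat{M}\mat{D}$ part only, and add the two forms of the energy identity so that the left-hand side becomes $\od{}{t}\norm{u}_{M+K}^2$. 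This reproduces the display just above the lemma,
\[
  \od{}{t}\norm{u}_{M+K}^2
  = -2\,\vec{u}^T\mat{K}\mat{D}\vec{u}
    -2\,\vec{u}^T(\mat{M}+\mat{K})\mat{C}(\vec{f}^{num}-\mat{R}\vec{u})
    -\vec{u}^T\mat{R}^T\mat{B}\mat{R}\vec{u}.
\]
The last term is pure boundary data and is harmless; the first and second are where the two hypotheses enter.

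For the first term I would invoke the assumption that $\mat{K}\mat{D}$ is antisymmetric: since a quadratic form of an antisymmetric matrix vanishes, $\vec{u}^T\mat{K}\mat{D}\vec{u}=0$ identically, so the stabilising modification $\mat{K}$ contributes nothing whatsoever to the discrete energy balance in the linear case — the analogue of the observation in \cite{vincent2015extended} that the correction-function parameter drops out of the linear estimate. For the second term I would insert the prescribed $\mat{C}=(\mat{M}+\mat{K})^{-1}\mat{R}^T\mat{B}$, so that $(\mat{M}+\mat{K})\mat{C}=\mat{R}^T\mat{B}$ and the correction contribution reduces to $-2\,\vec{u}^T\mat{R}^T\mat{B}(\vec{f}^{num}-\mat{R}\vec{u})$, again only involving boundary values.

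Combining these two simplified terms with the leftover $-\vec{u}^T\mat{R}^T\mat{B}\mat{R}\vec{u}$ collapses the right-hand side to
\[
  \od{}{t}\norm{u}_{M+K}^2 = \vec{u}^T\mat{R}^T\mat{B}\,(\mat{R}\vec{u}-2\vec{f}^{num}),
\]
which is exactly the boundary expression that appeared in the $\norm{\cdot}_M$ analysis. From here the argument is verbatim the earlier one: $\mat{R}$ evaluates the degree-$\leq p$ polynomial $u$ exactly at $\pm 1$ and $\mat{B}=\diag{-1,1}$, so the right-hand side equals $u_R(u_R-2f^{num}_R)-u_L(u_L-2f^{num}_L)$; summing over all elements with periodic boundary conditions or compact support telescopes the interface terms into a sum of local contributions $-2\alpha(u_+-u_-)^2$, each $\leq 0$ for the numerical flux \eqref{eq:num-flux-linear-constant-advection} with $\alpha\geq 0$. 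Since $\mat{M}+\mat{K}>0$ makes $\norm{\cdot}_{M+K}$ a genuine norm, this yields $\od{}{t}\norm{u}_{M+K}\leq 0$, i.e.\ stability.

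I do not expect any real obstacle inside the proof: once $\mat{K}\mat{D}$ is antisymmetric and $\mat{M}+\mat{K}>0$ the algebra is entirely forced, and both properties are part of the hypothesis rather than something to be established here. The only steps requiring attention are the sign bookkeeping at interior interfaces — making sure the shared numerical flux enters each of the two adjacent elements with opposite orientation so the telescoping is exact — and the remark that positive definiteness of $\mat{M}+\mat{K}$ is precisely what is needed for $\norm{\cdot}_{M+K}$ to be a norm, and hence for the estimate to control the solution. The genuinely substantive questions — whether nontrivial $\mat{K}$ satisfying both constraints exist, and how the resulting families relate to those of Vincent et al.\ \cite{vincent2015extended} — are exactly what the subsequent sections address.
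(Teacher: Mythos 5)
Your proposal is correct and follows essentially the same route as the paper: multiply by $\vec{u}^T(\mat{M}+\mat{K})$, apply the SBP property to the $\mat{M}\mat{D}$ part, kill the volume term $\vec{u}^T\mat{K}\mat{D}\vec{u}$ by antisymmetry, reduce the correction term via $(\mat{M}+\mat{K})\mat{C}=\mat{R}^T\mat{B}$, and reuse the interface estimate from the $\norm{\cdot}_M$ analysis. No gaps.
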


\subsection{Symmetry}

In order to recognize the FR method associated with an SBP CPR method, it suffices
to identify the correction matrix $\mat{C}$ with the derivatives of the left
and right correction function $g_L, g_R$. Using again a nodal polynomial basis
with symmetric nodes $\xi_0, \dots, \xi_p$ in the standard element, writing
\begin{equation}
  \mat{C} = \begin{pmatrix}
              g_L'(\xi_0)   & g_R'(\xi_0)
           \\ \vdots        & \vdots
           \\ g_L'(\xi_p)   & g_R'(\xi_p)
            \end{pmatrix}
\end{equation}
provides the required identification of SBP CPR parameters and FR correction
functions. Note that $g_L(-1) = 1 = g_R(1)$ is required, so that the integration
constant is fixed. The symmetry property $g_R(\xi) = g_L(-\xi)$ (and therefore
also $g_R'(\xi) = - g_L'(-\xi)$) should be satisfied for the correction
procedure in order not to get any bias to one direction. Translated to the CPR
method, this requires
\begin{equation}
  \mat{C} = \begin{pmatrix}
              g'(\xi_0)   & -g'(\xi_p)
           \\ \vdots        & \vdots
           \\ g'(\xi_p)   & -g'(\xi_0)
            \end{pmatrix},
\label{eq:symmetry-C}
\end{equation}
dropping the index for $g_L$ and using the symmetry of $g_L$, $g_R$, and
the nodes $\xi_0, \dots, \xi_p$.

Assume that the nodal basis is associated with a symmetric quadrature that is
exact for polynomials of degree $\leq p$. Then, a coordinate transformation to
Legendre polynomials, i.e. from a nodal basis to a modal basis, is given by
the Vandermonde matrix $\mat{V}$ with $V_{i,j} = \phi_j(\xi_i)$, where
$\phi_j, j=0, \dots, p$ are the Legendre polynomials. Writing matrices and vectors
with respect to the Legendre basis using $\hat\cdot$, the transformation is
$\mat{V} \hat{\vec{u}} = \vec{u}$. Therefore, the operator matrices like the
derivative matrix transform according to $\hat{\mat{D}} = \mat{V}^{-1} \mat{D} \mat{V}$
and the matrices associated with bilinear forms like $\mat{M}$ and $\mat{K}$ can
be computed as $\hat{\mat{M}} = \mat{V}^T \mat{M} \mat{V}$.

Because the transformation from Lagrange to Legendre polynomials does not change
the basis of the boundary, which is still a nodal basis for a quadrature
(indeed, in this one dimensional setting, it is an exact evaluation), the
modal correction matrix is $\hat{\mat{C}} = \mat{V}^{-1} \mat{C}$, i.e.
\begin{equation}
  \hat{\mat{C}}
  = \mat{V}^{-1} \mat{C}
  = \mat{V}^{-1} ( \vec{g_L'} \,,\, \vec{g_R'} )
  = ( \hat{\vec{g_L'}} \,,\, \hat{\vec{g_R'}} ).
\label{eq:C-Legendre}
\end{equation}
Because of the alternating symmetry and antisymmetry of the Legendre polynomials,
the symmetry condition \eqref{eq:symmetry-C} is translated to
\begin{equation}
  \hat{\mat{C}}
  = ( \hat{\vec{g_L'}} \,,\, \hat{\vec{g_R'}} )
  = \begin{pmatrix}
      -c_o            & c_0
   \\  c_1            & c_1
   \\ \vdots          & \vdots
   \\ (-1)^{p+1} c_p  & c_p
    \end{pmatrix},
\end{equation}
for some coefficients $c_0, \dots, c_p$. Using $\hat{\mat{C}} = \mat{V}^{-1} \mat{C}$,
this becomes
\begin{equation}
\begin{aligned}
   \begin{pmatrix}
       -c_0            & c_0
    \\  c_1            & c_1
    \\ \vdots          & \vdots
    \\ (-1)^{p+1} c_p  & c_p
   \end{pmatrix}
  &= \hat{\mat{C}}
   = \mat{V}^{-1} \mat{C}
\\&= \mat{V}^{-1} (\mat{M} + \mat{K})^{-1} \mat{R}^T \mat{B}
\\&= \mat{V}^{-1} \mat{V} (\hat{\mat{M}} + \hat{\mat{K}})^{-1} \mat{V}^{-T} \mat{V}^{T} \mat{R}^T \mat{B}
\\&= (\hat{\mat{M}} + \hat{\mat{K}})^{-1} \mat{R}^T \mat{B}.
\end{aligned}
\end{equation}
The modal restriction matrix is given by the values of the Legendre polynomials
$\phi_i$ at $-1$ and $1$, i.e. $\phi_i(-1) = (-1)^i$ and $\phi_i(1) = 1$.
Therefore, the symmetry condition reduces to
\begin{equation}
\begin{aligned}
  \begin{pmatrix}
      -c_0            & c_0
   \\  c_1            & c_1
   \\ \vdots          & \vdots
   \\ (-1)^{p+1} c_p  & c_p
  \end{pmatrix}
  &= (\hat{\mat{M}} + \hat{\mat{K}})^{-1} 
    \begin{pmatrix}
       1        & 1
    \\ -1       & 1
    \\ \vdots   & \vdots
    \\ (-1)^{p} & 1
    \end{pmatrix}
    \begin{pmatrix}
       -1 & 0
    \\ 0  & 1
    \end{pmatrix}
\\&= (\hat{\mat{M}} + \hat{\mat{K}})^{-1}
    \begin{pmatrix}
       -1         & 1
    \\  1         & 1
    \\ \vdots     & \vdots
    \\ (-1)^{p+1} & 1
    \end{pmatrix}
\end{aligned}
\label{eq:symmetry-using-Legendre}
\end{equation}
A sufficient condition for this equality in analogy to
\cite[Thm. 2]{vincent2015extended} is given by

\begin{lem}
\label{lem:CPR-symmetry}
  If for $\hat{\mat{J}} = \diag{-1, 1, \dots, (-1)^{p+1}}$ the condition
  \begin{equation}
    \hat{\mat{J}} (\hat{\mat{M}} + \hat{\mat{K}})
    = (\hat{\mat{M}} + \hat{\mat{K}}) \hat{\mat{J}}
  \end{equation}
  is satisfied, than the SBP CPR method is symmetric in the sense of equation
  \eqref{eq:symmetry-C}.
\end{lem}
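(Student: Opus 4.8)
The plan is to verify the matrix identity \eqref{eq:symmetry-using-Legendre} directly, since, as the derivation preceding the lemma shows, this identity is equivalent to the desired symmetry \eqref{eq:symmetry-C} of $\mat{C}$. In the Legendre basis the correction matrix is $\hat{\mat{C}} = (\hat{\mat{M}} + \hat{\mat{K}})^{-1} \mat{R}^T \mat{B}$, and the factor $\mat{R}^T \mat{B}$ appearing on the right-hand side of \eqref{eq:symmetry-using-Legendre} has, as its right column, $\vec{1} = (1, \dots, 1)^T$ and, as its left column, the vector with entries $(-1)^{i+1}$, which is exactly $\hat{\mat{J}} \vec{1}$ for $\hat{\mat{J}} = \diag{-1, 1, \dots, (-1)^{p+1}}$. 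Therefore, setting $\vec{c} := (\hat{\mat{M}} + \hat{\mat{K}})^{-1} \vec{1} =: (c_0, \dots, c_p)^T$, the right column of $\hat{\mat{C}}$ equals $\vec{c}$ and its left column equals $(\hat{\mat{M}} + \hat{\mat{K}})^{-1} \hat{\mat{J}} \vec{1}$, so that \eqref{eq:symmetry-using-Legendre} (the statement that the left column is $\hat{\mat{J}} \vec{c}$) reduces to the single relation $(\hat{\mat{M}} + \hat{\mat{K}})^{-1} \hat{\mat{J}} \vec{1} = \hat{\mat{J}} (\hat{\mat{M}} + \hat{\mat{K}})^{-1} \vec{1}$.

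Next I would derive this relation from the hypothesis. Since $\hat{\mat{M}} + \hat{\mat{K}}$ is positive definite, hence invertible (this is needed anyway for $\mat{C} = (\mat{M} + \mat{K})^{-1} \mat{R}^T \mat{B}$ to make sense), multiplying the assumed commutation $\hat{\mat{J}} (\hat{\mat{M}} + \hat{\mat{K}}) = (\hat{\mat{M}} + \hat{\mat{K}}) \hat{\mat{J}}$ from both sides by $(\hat{\mat{M}} + \hat{\mat{K}})^{-1}$ gives
\[
  (\hat{\mat{M}} + \hat{\mat{K}})^{-1} \hat{\mat{J}}
  = \hat{\mat{J}} (\hat{\mat{M}} + \hat{\mat{K}})^{-1} .
\]
Applying this to the vector $\vec{1}$ yields precisely the relation needed above, and hence \eqref{eq:symmetry-using-Legendre}. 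Undoing the change of basis via $\mat{C} = \mat{V} \hat{\mat{C}}$, the inverse of \eqref{eq:C-Legendre}, then gives the symmetry \eqref{eq:symmetry-C} of $\mat{C}$ with respect to the nodal basis, completing the proof.

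I do not expect a genuine obstacle here: the argument is essentially the fact that the inverse of a matrix commuting with $\hat{\mat{J}}$ still commutes with $\hat{\mat{J}}$, together with the sign-pattern bookkeeping already carried out in \eqref{eq:symmetry-using-Legendre}. The only places that deserve a moment of attention are reading off that the two columns of $\mat{R}^T \mat{B}$ in the Legendre basis are $\hat{\mat{J}} \vec{1}$ and $\vec{1}$, and noting that it is invertibility (not merely symmetry) of $\hat{\mat{M}} + \hat{\mat{K}}$ that makes the commutation relation transfer to the inverse.
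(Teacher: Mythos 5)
Your proposal is correct and takes essentially the same route as the paper's own proof: both read off the two columns of \eqref{eq:symmetry-using-Legendre}, let the right column define the coefficients $c_0, \dots, c_p$ via $\vec{c} = (\hat{\mat{M}} + \hat{\mat{K}})^{-1} \vec{1}$, and observe that the left-column condition $\hat{\mat{J}}\vec{c} = (\hat{\mat{M}} + \hat{\mat{K}})^{-1} \hat{\mat{J}} \vec{1}$ follows because commutation with $\hat{\mat{J}}$ passes to the inverse. You merely make explicit the invertibility step that the paper leaves implicit.
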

\begin{proof}
  Comparing the rows of equation \eqref{eq:symmetry-using-Legendre} leads to
  the conditions
  \begin{equation}
    \begin{pmatrix}
        c_0
    \\  c_1
    \\ \vdots
    \\  c_p
    \end{pmatrix}
    = (\hat{\mat{M}} + \hat{\mat{K}})^{-1}
      \begin{pmatrix}
         1
      \\ 1
      \\ \vdots
      \\ 1
      \end{pmatrix}
  \end{equation}
  \begin{equation}
  \begin{aligned}
    \hat{\mat{J}}
    \begin{pmatrix}
        c_0
    \\  c_1
    \\ \vdots
    \\  c_p
    \end{pmatrix}
    =
    \begin{pmatrix}
       -c_0
    \\  c_1
    \\ \vdots
    \\ (-1)^{p+1} c_p
    \end{pmatrix}
    = (\hat{\mat{M}} + \hat{\mat{K}})^{-1}
      \begin{pmatrix}
         -1
      \\  1
      \\ \vdots
      \\ (-1)^{p+1}
      \end{pmatrix}
    = (\hat{\mat{M}} + \hat{\mat{K}})^{-1} \hat{\mat{J}}
      \begin{pmatrix}
         1
      \\ 1
      \\ \vdots
      \\ 1
      \end{pmatrix}.
  \end{aligned}
  \end{equation}
  The first condition determines the coefficients $c_0, \dots, c_p$ and the
  second one is automatically satisfied if $(\hat{\mat{M}} + \hat{\mat{K}})$
  and $\hat{\mat{J}}$ commute.
\end{proof}

If the quadrature is exact of order $2p - 1$, $\hat{\mat{M}}$ is still a diagonal
matrix, because the Legendre polynomials are orthogonal. The entries with index
$0$ to $p-1$ are the correct norms of the corresponding Legendre polynomials and
the last entry may be changed. For Gauß-Lobatto-Legendre quadrature, the last
entry is $\hat{\mat{M}}_{p,p} = \frac{2}{p}$, as used in \cite{gassner2011comparison}.
For Gauß-Legendre quadrature, the last entry is the correct value $\frac{2}{2p-1}$,
because the quadrature is exact for polynomials of degree $\leq 2p + 1$.
In this case a new result shows that the condition of Lemma
\ref{lem:CPR-symmetry} is automatically satisfied:

\begin{lem}
\label{lem:CPR-symmetry-satisfied}
  If the SBP CPR method is associated with a quadrature of order $2p - 1$,
  $\mat{K} \mat{D} + \mat{D}^T \mat{K}^T = 0$ and $\mat{M} + \mat{K}$ is positive
  definite ($\mat{M}$ is positive definite by definition), then the symmetry condition
  $\hat{\mat{J}} (\hat{\mat{M}} + \hat{\mat{K}}) = (\hat{\mat{M}} + \hat{\mat{K}}) \hat{\mat{J}}$
  of Lemma \ref{lem:CPR-symmetry} is satisfied.
\end{lem}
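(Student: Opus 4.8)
The plan is to pass to the modal (Legendre) basis and to show that the hypothesis on $\mat{K}$ by itself already forces $\hat{\mat{K}}$ to commute with $\hat{\mat{J}}$, so that $\hat{\mat{M}}+\hat{\mat{K}}$ does too; positive definiteness, although part of the standing assumptions, will not actually be needed for this particular conclusion (it is what makes the correction matrix $(\mat{M}+\mat{K})^{-1}\mat{R}^T\mat{B}$ well defined in the first place).

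First I would collect three facts in the Legendre basis, writing $\hat{\mat{K}}=\mat{V}^T\mat{K}\mat{V}$, $\hat{\mat{M}}=\mat{V}^T\mat{M}\mat{V}$ and $\hat{\mat{D}}=\mat{V}^{-1}\mat{D}\mat{V}$. Since the quadrature is exact for degree $\leq 2p-1$ and $\phi_i\phi_j$ has degree $\leq 2p-1$ for $i\neq j$, $i,j\leq p$, the off-diagonal entries of $\hat{\mat{M}}$ equal the exact integrals $\int_{-1}^1\phi_i\phi_j\dif x=0$, so $\hat{\mat{M}}$ is diagonal and commutes with $\hat{\mat{J}}$. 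Because $\phi_j$ has parity $(-1)^j$ while $\deg\phi_j'=j-1$ for $j\geq 1$, the exact-derivative matrix $\hat{\mat{D}}$ is strictly upper triangular with $\hat D_{ij}\neq 0$ only if $i<j$ and $i+j$ odd, and the superdiagonal entries $\hat D_{i-1,i}=2i-1$ are nonzero (both follow from $\phi_i'=(2i-1)\phi_{i-1}+(2i-5)\phi_{i-3}+\dots$); in particular $\hat{\mat{J}}\hat{\mat{D}}\hat{\mat{J}}=-\hat{\mat{D}}$. Finally, transforming $\mat{K}\mat{D}+\mat{D}^T\mat{K}^T=0$ with $\mat{K}=\mat{K}^T$ gives $\hat{\mat{K}}\hat{\mat{D}}+(\hat{\mat{K}}\hat{\mat{D}})^T=0$, i.e.\ $\hat{\mat{K}}\hat{\mat{D}}$ is antisymmetric with $\hat{\mat{K}}$ still symmetric.

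The heart of the proof is to deduce from the two facts that $\hat{\mat{K}}$ is symmetric and $\hat{\mat{K}}\hat{\mat{D}}$ is antisymmetric that every entry $\hat K_{ij}$ with $i+j$ odd vanishes; this is exactly the assertion that $\hat{\mat{K}}$ commutes with the diagonal matrix $\hat{\mat{J}}=\diag{-1,1,\dots,(-1)^{p+1}}$. I would establish it by a double induction: an outer induction on the larger index $j=1,\dots,p$, and, for fixed $j$, an inner induction on the smaller index $i$ increasing up to $j-1$, over those $i<j$ with $i+j$ odd. To kill $\hat K_{ij}$ I would read off the antisymmetry relation for the index pair $(i+1,j)$, namely $(\hat{\mat{K}}\hat{\mat{D}})_{i+1,\,j}+(\hat{\mat{K}}\hat{\mat{D}})_{j,\,i+1}=0$. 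Expanding each entry of $\hat{\mat{K}}\hat{\mat{D}}$ as a row of $\hat{\mat{K}}$ contracted with a column of $\hat{\mat{D}}$, and using the triangular/parity support of $\hat{\mat{D}}$: for $i<j-1$, the term $(\hat{\mat{K}}\hat{\mat{D}})_{i+1,j}$ involves only entries $\hat K_{i+1,l}$ with $l<j$, all of which are entries with both indices $\leq j-1$ and hence vanish by the outer induction; the term $(\hat{\mat{K}}\hat{\mat{D}})_{j,i+1}$ involves $\hat K_{j,i}=\hat K_{ij}$ together with entries $\hat K_{j,l}$ for $l<i$, which vanish by the inner induction; since the coefficient of $\hat K_{ij}$ is $\hat D_{i,i+1}=2i+1\neq 0$, the relation reduces to $(2i+1)\hat K_{ij}=0$. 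For $i=j-1$ the pair $(i+1,j)$ is $(j,j)$ and the relation is just the vanishing of the diagonal of $\hat{\mat{K}}\hat{\mat{D}}$, which again collapses to $(2j-1)\hat K_{j-1,j}=0$ once the entries $\hat K_{j,l}$ with $l<j-1$ have been eliminated by the inner induction. Both inductions close, so $\hat{\mat{K}}$ commutes with $\hat{\mat{J}}$.

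Since $\hat{\mat{M}}$ is diagonal, $\hat{\mat{M}}+\hat{\mat{K}}$ then commutes with $\hat{\mat{J}}$, which is precisely the hypothesis of Lemma~\ref{lem:CPR-symmetry}. The only place where real work is required is the bookkeeping in the double induction: one has to pick, for each entry $\hat K_{ij}$ with $i+j$ odd, the right antisymmetry relation (the pair $(i+1,j)$) and verify that every other term occurring in it has already been eliminated in an earlier step. This is elementary, but it genuinely uses the explicit staircase shape of $\hat{\mat{D}}$, not merely the parity identity $\hat{\mat{J}}\hat{\mat{D}}\hat{\mat{J}}=-\hat{\mat{D}}$; the latter alone (together with $\hat{\mat{K}}$ symmetric) is not enough, which is also why the argument needs the modal picture rather than staying in the nodal basis.
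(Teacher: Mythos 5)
Your proof is correct. It shares the paper's overall strategy -- pass to the Legendre basis, note that exactness of degree $2p-1$ makes $\hat{\mat{M}}$ diagonal, and reduce everything to showing $\hat{\mat{J}}\hat{\mat{K}}=\hat{\mat{K}}\hat{\mat{J}}$, i.e.\ $\hat{K}_{ij}=0$ for $i+j$ odd, from the antisymmetry of $\hat{\mat{K}}\hat{\mat{D}}$ and the explicit staircase form of $\hat{\mat{D}}$ -- but the induction is organised quite differently. The paper inducts on the matrix size $p$: it partitions $\hat{\mat{J}},\hat{\mat{D}},\hat{\mat{K}}$ into blocks, applies the induction hypothesis to the leading block, and is then left with showing that the last column $k$ of $\hat{\mat{K}}$ satisfies $(I-(-1)^p\hat{\mat{J}})k=0$; this step needs the parity identity $\hat{\mat{J}}\hat{\mat{D}}\hat{\mat{J}}=-\hat{\mat{D}}$, a kernel argument for $\hat{\mat{D}}^T$ (namely $\hat{\mat{D}}^Tx=0$ forces all but the last component of $x$ to vanish), the sparsity pattern of the appended column $d$, and finally the scalar relation $d^Tk=0$. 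Your double induction on the entries -- choosing for each target $\hat{K}_{ij}$ the single antisymmetry relation at the index pair $(i+1,j)$, where the nonzero superdiagonal entry $\hat{D}_{i,i+1}=2i+1$ isolates $\hat{K}_{ij}$ once the previously eliminated entries are discarded -- is more elementary and avoids both the block decomposition and the rank/kernel considerations; the bookkeeping closes correctly, including the base case $j=1$ and the diagonal case $i=j-1$. Your two side remarks are also accurate and apply equally to the paper's proof: positive definiteness of $\mat{M}+\mat{K}$ is never used in this lemma, and the parity identity alone would not suffice (e.g.\ $\mat{D}=0$ satisfies it vacuously), so the explicit structure of the modal derivative matrix is genuinely needed.
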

\begin{proof}
  Because the quadrature is exact for polynomials of order $\leq 2p - 1$, the
  modal mass matrix $\hat{\mat{M}}$ is diagonal and commutes with $\hat{\mat{J}}$.
  Therefore, it suffices to prove the commutativity with $\hat{\mat{K}}$.
 
  In the following part of the proof, the notation using $\hat{\cdot}$ and
  $\mat{\cdot}$ is dropped due to simplicity.
  
  Proof for $J K = K J$ by induction on $p$: For $p = 1$, the relevant matrices are
  \begin{equation}
    J = \begin{pmatrix}
          -1 & 0
        \\ 0 & 1
        \end{pmatrix}
    ,\,
    D = \begin{pmatrix}
           0 & 1
        \\ 0 & 0
        \end{pmatrix}
    ,\,
    K = \begin{pmatrix}
           k_{00} & k_{01}
        \\ k_{01} & k_{11}
        \end{pmatrix}.
  \end{equation}
  Therefore, $K D + D^T K = 0$ implies
  \begin{equation}
    \begin{pmatrix}
       0 & k_{00}
    \\ 0 & k_{01}
    \end{pmatrix}
    +
    \begin{pmatrix}
       0      & 0
    \\ k_{00} & k_{01}
    \end{pmatrix}
    = 0,
  \end{equation}
  i.e. $k_{00} = k_{01} = 0$. Using this results in $J K J = K$.
  
  $p \to p+1\colon \quad J_+ K_+ J_+ = K_+$ has to be proven using the result for
  $K, J$. The matrices are given by
  \begin{equation}
    J_+ = \begin{pmatrix}
             J  &  0
          \\ 0  &  (-1)^p
          \end{pmatrix}
    ,\,
    D_+ = \begin{pmatrix}
             D  &  d
          \\ 0  &  0
          \end{pmatrix}
    ,\,
    K_+ = \begin{pmatrix}
             K    &  k
          \\ k^T  &  \kappa
          \end{pmatrix}.
  \end{equation}
  Using induction, the equation is
  \begin{equation}
  \begin{aligned}
    J_+ K_+ J_+
    &= J_+
      \begin{pmatrix}
         K J    &  (-1)^p k
      \\ k^T J  &  (-1)^p \kappa
      \end{pmatrix}
    = \begin{pmatrix}
         J K J         &  (-1)^p J k
      \\ (-1)^p k^T J  &  (-1)^{2p} \kappa
      \end{pmatrix}
  \\&= \begin{pmatrix}
         K             &  (-1)^p J k
      \\ (-1)^p k^T J  &  \kappa
      \end{pmatrix}.
  \end{aligned}
  \end{equation}
  Thus, it remains to show $k = (-1)^p J k$ using $D_+^T K_+ + K_+ D_+ = 0$, i.e.
  \begin{equation}
    \begin{pmatrix}
         D^T K  &  D^T k
      \\ d^T K  &  d^T k
    \end{pmatrix}
    +
    \begin{pmatrix}
         K D    &  K d
      \\ k^T D  &  k^T d
    \end{pmatrix}
    =
    \begin{pmatrix}
         D^T K + K D    &  D^T k + K d
      \\ d^T K + k^T D  &  2 d^T k
    \end{pmatrix}
    = 0
  \label{eq:DtK+KD}
  \end{equation}
  $k = (-1)^p J k$ can be written as $\diag{1+(-1)^p, \dots, 2, 0, 2} \, k = 0$.
  That is, $k_p = k_{p-2} = \dots = 0$ is to be proven.
  
  Calculating the product of $J$ and $D$ results in
  \begin{equation}
  \begin{aligned}
    &J \, D \, J
    \\&= 
      \begin{pmatrix}
         1      &  0     &  0     &     0  &  \dots
      \\ 0      &  -1    &  0     &     0  &  \dots
      \\ 0      &  0     &  1     &     0  &  \dots
      \\ 0      &  0     &  0     &     -1 &  \dots
      \\ \vdots & \vdots & \vdots & \vdots & \ddots
      \end{pmatrix}
      \begin{pmatrix}
         0      &  1     &  0     &     1  &  0     &  \dots
      \\ 0      &  0     &  3     &     0  &  3     &  \dots
      \\ 0      &  0     &  0     &     5  &  0     &  \dots
      \\ 0      &  0     &  0     &     0  &  7     &  \dots
      \\ \vdots & \vdots & \vdots & \vdots & \vdots & \ddots
      \end{pmatrix}
      \begin{pmatrix}
         1      &  0     &  0     &     0  &  \dots
      \\ 0      &  -1    &  0     &     0  &  \dots
      \\ 0      &  0     &  1     &     0  &  \dots
      \\ 0      &  0     &  0     &     -1 &  \dots
      \\ \vdots & \vdots & \vdots & \vdots & \ddots
      \end{pmatrix}
  \\& =
      \begin{pmatrix}
         0      &  1     &  0     &     1  &  0     &  \dots
      \\ 0      &  0     &  -3    &     0  &  -3    &  \dots
      \\ 0      &  0     &  0     &     5  &  0     &  \dots
      \\ 0      &  0     &  0     &     0  &  -7    &  \dots
      \\ \vdots & \vdots & \vdots & \vdots & \vdots & \ddots
      \end{pmatrix}
      \begin{pmatrix}
         1      &  0     &  0     &     0  &  \dots
      \\ 0      &  -1    &  0     &     0  &  \dots
      \\ 0      &  0     &  1     &     0  &  \dots
      \\ 0      &  0     &  0     &     -1 &  \dots
      \\ \vdots & \vdots & \vdots & \vdots & \ddots
      \end{pmatrix}
      = -D.
  \end{aligned}
  \end{equation}
  Using $J^2 = I$, $J^T = J$, and $D^T J = - J D^T$, yields
  \begin{equation}
    D^T (I - (-1)^p J) k
    = D^T k - (-1)^p D^T J k
    = D^T k + (-1)^p J D^T k.
  \end{equation}
  Using equation \eqref{eq:DtK+KD} together with $J K = K J$ results in
  \begin{equation}
  \begin{aligned}
    D^T (I - (-1)^p J) k
      &= (I + (-1)^p J) D^T k
       = - (I + (-1)^p J) K d
    \\&= - K (I + (-1)^p J) d.
  \end{aligned}
  \end{equation}
  Since $(I + (-1)^p J) = \diag{\dots, 0, 2, 0}$ and $d = (\dots, *, 0, *)^T$, 
  $(I + (-1)^p J) d = 0$ and therefore also $D^T (I - (-1)^p J) k = 0$. Here
  and in the following, $*$ is a placeholder for an arbitrary real number.
  Because of the implication
  \begin{equation}
    0 = D^T x = 
    \begin{pmatrix}
         0      &  0     &  0     &     0  &  \dots
      \\ 1      &  0     &  0     &     0  &  \dots
      \\ 0      &  3     &  0     &     0  &  \dots
      \\ 1      &  0     &  5     &     0  &  \dots
      \\ 0      &  3     &  0     &     7  &  \dots
      \\ \vdots & \vdots & \vdots & \vdots & \vdots & \ddots
    \end{pmatrix}
    \begin{pmatrix}
      x_0 \\ \vdots \\ x_p
    \end{pmatrix}
    \implies x =
    \begin{pmatrix}
      0 \\ \vdots \\ 0 \\ x_p
    \end{pmatrix}
  \end{equation}
  and $I - (-1)^p J = \diag{\dots, 2, 0, 2, 0, 2}$, one can deduce that $k$ can
  be written as
  $k = (\dots, 0, *, 0, *, *)^T$. Finally, using $d^T k = 0$ from equation
  \eqref{eq:DtK+KD} yields $k_p = k_{p-2} = \dots = 0$ and finishes the proof.
\end{proof}

\subsection{Summary}

The results of the previous sections are summed up in the following
\begin{thm}
\label{thm:CPR-1D-linear}
  Let a one dimensional CPR method be given by a nodal basis $\mathcal{B}$
  of polynomials of   degree $\leq p$, associated with a quadrature,
  given by symmetric nodes   $z_0, \dots, z_p \in (-1,1)$ and positive
  weights $\omega_0, \dots, \omega_p > 0$, that is exact for polynomials of
  degree $\leq 2p - 1$. Let
  \begin{itemize}
    \item 
    $\mat{M} = \diag{\omega_0, \dots, \omega_p} > 0$ be the (positive definite
    and diagonal) mass matrix associated with a bilinear volume quadrature,
    
    \item
    $\mat{R}$ be the restriction operator, performing an interpolation to the
    boundary,
    
    \item
    $\mat{B} = \diag{-1, 1}$ be the boundary matrix, associated with an integral
    along the outer normal of the boundary, and
    
    \item
    $\mat{D}$ be the discrete derivative matrix, associated with the divergence
    operator,
  \end{itemize}
  satisfying the SBP property
  $\mat{M} \mat{D} + \mat{D}^T \mat{M} = \mat{R}^T \mat{B} \mat{R}$.
  Then the following results are valid:
  
  \begin{enumerate}
    \item
    If $\vec{1}^T \mat{M} \mat{C} = \vec{1}^T \mat{R}^T \mat{B}$, where
    $\vec{1}$ is the representation of the constant function $\xi \mapsto 1$,
    then the SBP CPR method with correction parameters $\mat{C}$ is conservative
    (see Lemma \ref{lem:CPR-conservation}).
    
    \item
    If $\mat{C} = (\mat{M} + \mat{K})^{-1} \mat{R}^T \mat{B}$, where
    $\mat{M} + \mat{K}$ is positive definite and $\mat{K} \mat{D}$ is antisymmetric,
    then the SBP CPR method given by $\mat{C}$ is linearly stable in the discrete
    norm $\norm{\cdot}_{M+K}$ induced by $\mat{M} + \mat{K}$
    (see Lemma \ref{lem:CPR-linearly-stable}).
    
    \item
    If again $\mat{C} = (\mat{M} + \mat{K})^{-1} \mat{R}^T \mat{B}$, where
    $\mat{M} + \mat{K}$ is positive definite and $\mat{K} \mat{D}$ is antisymmetric,
    then the SBP CPR method given by $\mat{C}$ is associated with a FR scheme using
    symmetric correction functions $g_L(\xi) = g_R(-\xi)$
    (see Lemmata \ref{lem:CPR-symmetry} and \ref{lem:CPR-symmetry-satisfied}).
  \end{enumerate}
\end{thm}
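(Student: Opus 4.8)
The plan is to assemble the three assertions from the lemmata already proved in this section, after checking that the hypotheses of the theorem are exactly the ones those lemmata require. First I would observe that the standing assumptions — a nodal basis of polynomials of degree $\leq p$ together with a symmetric quadrature that is exact for degree $\leq 2p-1$ — are precisely the assumptions under which $\mat{M} = \diag{\omega_0,\dots,\omega_p}$ is diagonal and positive definite, the SBP property $\mat{M}\mat{D} + \mat{D}^T\mat{M} = \mat{R}^T\mat{B}\mat{R}$ holds (here it is moreover assumed outright), $\mat{D}\vec{1} = 0$, and the boundary evaluation $\eval[0]{\cdot}_{-1}^{1}$ is exact for polynomials of degree $\leq p$. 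These are exactly the ingredients used in the derivations of this section.

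For the first claim, the additional hypothesis $\vec{1}^T \mat{M} \mat{C} = \vec{1}^T \mat{R}^T \mat{B}$ is verbatim the condition of Lemma~\ref{lem:CPR-conservation}; since the assumptions of its subsection are met, the derivation \eqref{eq:CPR-conservation-derivation} is valid and conservation across elements follows directly. For the second claim, the hypotheses $\mat{C} = (\mat{M}+\mat{K})^{-1}\mat{R}^T\mat{B}$, $\mat{M}+\mat{K}>0$, and $\mat{K}\mat{D}$ antisymmetric coincide verbatim with those of Lemma~\ref{lem:CPR-linearly-stable}, which yields linear stability in the norm $\norm{\cdot}_{M+K}$ for linear advection \eqref{eq:linear-constant-advection} with a numerical flux of the form \eqref{eq:num-flux-linear-constant-advection} and $\alpha \geq 0$.

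For the third claim I would combine Lemmata~\ref{lem:CPR-symmetry} and~\ref{lem:CPR-symmetry-satisfied}. Since $\mat{K}$ is symmetric, antisymmetry of $\mat{K}\mat{D}$ is the same as $\mat{K}\mat{D}+\mat{D}^T\mat{K}^T=0$; together with the quadrature being exact of order $2p-1$ (so that $\hat{\mat{M}}$ is diagonal and commutes with $\hat{\mat{J}}$) and with $\mat{M}+\mat{K}$ positive definite, Lemma~\ref{lem:CPR-symmetry-satisfied} gives the commutation $\hat{\mat{J}}(\hat{\mat{M}}+\hat{\mat{K}}) = (\hat{\mat{M}}+\hat{\mat{K}})\hat{\mat{J}}$. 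Feeding this into Lemma~\ref{lem:CPR-symmetry} yields the symmetry \eqref{eq:symmetry-C} of $\mat{C}$; by the identification \eqref{eq:C-Legendre} of the two columns of $\mat{C}$ with $\vec{g_L'}$ and $\vec{g_R'}$, together with the normalisation $g_L(-1)=1=g_R(1)$ fixing the integration constant, this is exactly $g_L(\xi)=g_R(-\xi)$, i.e. the associated FR scheme uses symmetric correction functions.

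Since this is a summary theorem, there is no genuine obstacle; the only point requiring a moment of care is checking that the hypothesis ``$\mat{K}\mat{D}$ antisymmetric'' appearing in claims~2 and~3 really matches the condition $\mat{K}\mat{D}+\mat{D}^T\mat{K}^T=0$ demanded by Lemma~\ref{lem:CPR-symmetry-satisfied}, which, as noted, uses symmetry of $\mat{K}$ — part of its definition in this section.
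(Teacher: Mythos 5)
Your proposal is correct and matches the paper exactly: Theorem \ref{thm:CPR-1D-linear} is stated as a summary whose three parts are proved by Lemma \ref{lem:CPR-conservation}, Lemma \ref{lem:CPR-linearly-stable}, and Lemmata \ref{lem:CPR-symmetry} and \ref{lem:CPR-symmetry-satisfied}, respectively, with the hypotheses matching verbatim. Your remark that the antisymmetry of $\mat{K}\mat{D}$ is equivalent to $\mat{K}\mat{D}+\mat{D}^T\mat{K}^T=0$ via the symmetry of $\mat{K}$ is the only point of care, and you handle it correctly.
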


\subsection{The one parameter family of Vincent et al. (2011)}

The approach of Vincent et al. \cite{vincent2011newclass} can be formulated as enforcing
$\mat{K} \mat{D} = 0$ by setting $\mat{K} = c (\mat{D}^p)^T \mat{D}^p$,
because $\mat{D}^{p+1} = 0$ (polynomials of degree $\leq p$).
However, in this work the ansatz $\mat{K} = \kappa (\mat{D}^p)^T \mat{M} \mat{D}^p$
is chosen to allow an interpretation in terms of discrete norms. Additionally,
the transformation of the matrices during a change of the basis is only handled
consistently in this way. In the following section, Gauß- and Lobatto-Legendre
quadrature rules accompanied by the associated nodal polynomial basis of
degree $\leq p$ are considered. Therefore, the leading assumptions of Theorem
\ref{thm:CPR-1D-linear} are satisfied.

For concrete computations, again a change to the Legendre basis is advantageous.
In these coordinates, the derivative matrix to the power of $p$ is simply
\begin{equation}
  \hat{\mat{D}}^p = \begin{pmatrix}
                      0 & \dots & 0 & p! \, a_p
                    \\0 & \dots & 0 & 0
                    \\\vdots & \vdots & \vdots & \vdots
                    \\ 0 & 0 & 0 & 0
                    \end{pmatrix},
  \quad
  a_p = \frac{(2p)!}{2^p (p!)^2},
\end{equation}
referring to the leading coefficient of the Legendre polynomial of degree $p$
in the same way as Vincent et al. \cite{vincent2011newclass}  as $a_p$. Therefore, using 
$\hat{\mat{M}} = \diag{2, *, \dots, *}$, the ansatz for $\hat{\mat{K}}$ becomes
\begin{equation}
  \hat{\mat{K}} = \kappa \begin{pmatrix}
                            0 & \dots & 0 & 0
                         \\ \vdots & \vdots & \vdots & \vdots
                         \\ 0 & \dots & 0 & 0
                         \\ 0 & \dots & 0 & 2 a_p^2 (p!)^2
                         \end{pmatrix}.
\end{equation}
The choice of the basis influences further computations through the mass matrix
$\hat{\mat{M}}$. In the following, variables associated with the Gauß-Legendre
and Lobatto-Legendre basis are denoted using a superscript $G$ and $L$, respectively.
Gauß-Legendre quadrature is exact for polynomials of degree $\leq 2p+1$ and
Lobatto-Legendre quadrature is exact for polynomials of degree $\leq 2p - 1$.
Therefore, $\hat{\mat{M}}^G$ and $\hat{\mat{M}}^L$ are both diagonal and
the last entry of $\hat{\mat{M}}^L$ is $\frac{2}{p}$ in accordance with
\cite{gassner2011comparison}:
\begin{equation}
\begin{aligned}
  &\hat{\mat{M}}^G = \diag{2, \frac{2}{3}, \dots, \frac{2}{2p-1}, \frac{2}{2p+1}},
  \\
  &\hat{\mat{M}}^L = \diag{2, \frac{2}{3}, \dots, \frac{2}{2p-1}, \frac{2}{p}}.
\end{aligned}
\label{eq:MG-and-ML}
\end{equation}
Therefore, $\mat{M}^G + \mat{K}$ and $\mat{M}^L + \mat{K}$ are positive definite
if and only if
\begin{equation}
  \kappa > \kappa_-^G := - \frac{1}{(2p+1) a_p^2 (p!)^2}
  , \quad
  \kappa > \kappa_-^L := - \frac{1}{p a_p^2 (p!)^2},
\label{eq:kappa-range}
\end{equation}
respectively. Therefore, the associated SBP CPR methods given by
$\mat{C} = (\mat{M} + \mat{K})^{-1} \mat{R}^T \mat{B}$
are linearly stable and conservative by Theorem \ref{thm:CPR-1D-linear} if $\kappa$
is chosen accordingly to \eqref{eq:kappa-range}. In addition, they are conservative,
since
\begin{equation}
\begin{aligned}
  \hat{\vec{1}}^T \hat{\mat{M}} \hat{\mat{C}}
  &= \hat{\vec{1}}^T \hat{\mat{M}}
     (\hat{\mat{M}} + \hat{\mat{K}})^{-1} \hat{\mat{R}}^T \hat{\mat{B}}
\\&= (1, 0, \dots, 0) \diag{2, *, \dots, *} \diag{\frac{1}{2}, *, \dots, *}
     \hat{\mat{R}}^T \hat{\mat{B}}
   = \hat{\vec{1}}^T \hat{\mat{R}}^T \hat{\mat{B}}.
\end{aligned}
\end{equation}
To compare the resulting methods with the ones obtained in \cite{vincent2011newclass},
equation \eqref{eq:C-Legendre} can be used. To compute $\mat{C}$ explicitly, the
restriction matrix $\mat{R}$ has to be computed in the Legendre basis. Describing
interpolation to the boundary, using $\phi_i(1) = 1$ and $\phi_i(-1) = (-1)^i$
it can be written as
\begin{equation}
  \hat{\mat{R}} = \begin{pmatrix}
                    1 & -1 & 1 & \dots & (-1)^p
                  \\1 &  1 & 1 & \dots & 1
                  \end{pmatrix}.
\end{equation}
Therefore, computing $\mat{C} = (\mat{M} + \mat{K})^{-1} \mat{R}^T \mat{B}$
explicitly results in
\begin{equation}
\begin{aligned}
  \hat{\mat{C}}^{G/L}
  &= \diag{\frac{1}{2}, \dots, \frac{2}{2p-1}, *^{G/L}}
    \begin{pmatrix}
      -1 & 1
    \\ 1 & 1
    \\ \vdots & \vdots
    \\ (-1)^{p+1} & 1
    \end{pmatrix}
\\&= \begin{pmatrix}
      -\frac{1}{2}           &  \frac{1}{2}
    \\ \frac{3}{2}           &  \frac{3}{2}
    \\ \vdots                &  \vdots
    \\ (-1)^p \frac{2p-1}{2} &  \frac{2p-1}{2}
    \\ (-1)^{p+1} *^{G/L}    &  *^{G/L}
    \end{pmatrix}
  = (\hat{\vec{g_L'}}^{G/L}, \, \hat{\vec{g_R'}}^{G/L}),
\end{aligned}
\label{eq:C-one-parameter-family-kappa}
\end{equation}
where $*^G = \left(\frac{2}{2p+1} + 2 \kappa a_p^2 (p!)^2 \right)^{-1}$
and $*^L = \left(\frac{2}{p} + 2 \kappa a_p^2 (p!)^2 \right)^{-1}$.
The (symmetric) correction functions of \cite{vincent2011newclass} are given by
\begin{equation}
  g_L = \frac{(-1)^p}{2}
        \left( \phi_p - \frac{\eta_p \phi_{p-1} + \phi_{p+1}}{1 + \eta_p} \right)
  ,\,
  g_R = \frac{1}{2}
        \left( \phi_p + \frac{\eta_p \phi_{p-1} + \phi_{p+1}}{1 + \eta_p} \right),
\label{eq:vincent-et-al-one-parameter-g}
\end{equation}
where
\begin{equation}
  \eta_p = c \frac{2p+1}{2} a_p^2 (p!)^2.
\end{equation}
Therefore, in order to compare the results, it remains to compute the derivatives
of \eqref{eq:vincent-et-al-one-parameter-g}. The derivative matrix of size
$(p+2) \times (p+2)$ for even $p$ is given as
\begin{equation}
  \hat{\mat{D}}_+
  = \begin{pmatrix}
       0  &  1  &  0  &  1       &  \dots  &  1       &  0       &  1
    \\    &  0  &  3  &  0       &  \dots  &  0       &  3       &  0
    \\    &     &  0  &  5       &  \dots  &  5       &  0       &  5
    \\    &     &     &  \ddots  &  \dots  &  \vdots  &  \vdots  &  \vdots
    \\    &     &     &          &         &          &  0       &  2p+1
    \\    &     &     &          &         &          &          &  0
    \end{pmatrix}
\end{equation}
and as
\begin{equation}
  \hat{\mat{D}}_+
  = \begin{pmatrix}
       0  &  1  &  0  &  1       &  \dots  &  1       &  0
    \\    &  0  &  3  &  0       &  \dots  &  0       &  3
    \\    &     &  0  &  5       &  \dots  &  5       &  0 
    \\    &     &     &  \ddots  &  \dots  &  \vdots  &  \vdots
    \\    &     &     &          &         &  0       &  2p+1
    \\    &     &     &          &         &          &  0
    \end{pmatrix}
\end{equation}
for odd $p$. Multiplication with
\begin{equation}
  \hat{\vec{g_L}}
  = \frac{(-1)^p}{2}  \left(
                        0,
                        \dots,
                        0,
                         -\frac{\eta_p}{1 + \eta_p},
                         1,
                         -\frac{1}{1 + \eta_p}
                      \right)^T
\end{equation}
results for both even and odd $p$ in the same coefficients with indices $0$ to
$p-1$ as in \eqref{eq:C-one-parameter-family-kappa} and thus, in order to get
the same methods, the last coefficient has to be the same, resulting in the equation
\begin{equation}
  *^{G/L}
  = \frac{2p+1}{2} \frac{1}{1 + \eta_p}
  = \frac{2p+1}{2} \frac{1}{1 + c \frac{2p+1}{2} a_p^2 (p!)^2}.
\label{eq:compare-one-parameter-coefficients}
\end{equation}
Inserting $*^{G/L}$ from above, this results in
\begin{equation}
  (*^G)^{-1}
  = \frac{2}{2p+1} + 2 \kappa^G a_p^2 (p!)^2
  = \frac{2}{2p+1} + c a_p^2 (p!)^2
\end{equation}
and
\begin{equation}
  (*^L)^{-1}
  = \frac{2}{p} + 2 \kappa^L a_p^2 (p!)^2
  = \frac{2}{2p+1} + c a_p^2 (p!)^2,
\end{equation}
respectively, and therefore the parameter $c$ of \cite{vincent2011newclass}
can be expressed as
\begin{equation}
  c
  = 2 \kappa^G
  = 2 \kappa^L + c_{Hu},
  \quad c_{Hu} = 2 \frac{p+1}{p (2p+1) a_p^2 (p!)^2},
\end{equation}
where $c_{Hu}$ is the coefficient
recovering the scheme named $g_2$ by Huynh \cite{huynh2007flux}. This scheme is exactly
the same as the DGSEM scheme with a nodal basis at Lobatto-Legendre nodes and a
lumped mass matrix used in \cite{gassner2013skew, gassner2014kinetic,
kopriva2014energy, gassner2016well} and proven to be an SBP scheme.

\subsection{The multi parameter family of Vincent et al. (2015)}

The results for the multi parameter family of linearly stable and conservative
schemes of Vincent et al. \cite{vincent2015extended} are similar to those obtained in the previous
section about the one parameter family -- as expected, since the one parameter
family is contained in the extended range of schemes.

The calculations of \cite{vincent2015extended} used an exact mass matrix (in
the Legendre basis) and are thus valid for Gauß-Legendre points. Using Lobatto-Legendre
quadrature will result in a transformed parameter space, recovering the same schemes
as before, similar to the previous section. In contrast to their results, the
solution point coordinates are considered to be an important parameter of an
SBP CPR method and thus included in the analysis. Therefore, discrete norms
are investigated and stability results are stated in these discrete norms.

\subsection{Numerical examples}

In order validate the implementation, the numerical experiments presented in
\cite{vincent2011newclass} are repeated. The conservation law solved is the
linear advection equation \eqref{eq:linear-constant-advection} with constant
velocity $1$ in one space dimension in the interval $[-1, 1]$ with periodic
boundary conditions. The initial condition is
\begin{equation}
  u_0(x) = e^{-20 x^2}.
\end{equation}
Several SBP CPR methods with $N = 10$ equally spaced elements of order $p = 3$
are utilised as semidscretisations and the classical fourth order Runge-Kutta method
with $50,000$ steps is used to obtain the solution in the time interval $[0, 20]$,
i.e. ten traversals of the initial data are regarded.

Results for a Lobatto-Legendre basis and the central numerical flux are shown
in Figures \ref{fig:vincent2011newclass-lobatto-central-1},
\ref{fig:vincent2011newclass-lobatto-central-2} and
\ref{fig:vincent2011newclass-lobatto-central-3}. Four different values of the
parameter $c$ for the correction matrix $\mat{C}$ are used,
the same as presented in \cite{vincent2011newclass}:
$c = c_-/2 = \frac{-1}{(2p+1) (a_p \, p!)^2} < 0$
(a negative parameter near the boundary value $c_-$ for stable schemes),
$c = c_0 = 0$ (no additional matrix $\mat{K}$ for exact integration, i.e. in
the framework of \cite{vincent2011newclass}, corresponding to Gauß-Legendre points in
the framework presented here), $c = c_{SD} = \frac{2p}{(2p+1) (p+1) (a_p \, p!)^2}$
(recovering a spectral difference method), and
$c = c_{Hu} = \frac{2(p+1)}{(2p+1) p (a_p \, p!)^2}$ (using the correction
functions named $g_2$ by Huynh \cite{huynh2007flux}, corresponding to the DGSEM
of \cite{gassner2013skew}).
Figure \ref{fig:vincent2011newclass-lobatto-central-1} consists
of plots of the solution at $t = 20$ (in blue) and the initial profile
at $t = 0$ (in green). In Figure \ref{fig:vincent2011newclass-lobatto-central-2},
the squared $L_2$ norms computed via Gauß (blue) and Lobatto (green) quadrature
in the time interval $[0, 20]$ are plotted. Finally, Figure 
\ref{fig:vincent2011newclass-lobatto-central-3} provides a zoomed in view of the
time interval $[0, 0.8]$. The solutions are visually the same as those obtained
in \cite{vincent2011newclass}. Since $c_{Hu}$ corresponds to the correction
function $g_2$ of \cite{huynh2007flux} and the corresponding SBP CPR method is
the same as the DGSEM of \cite{gassner2013skew}, the energy computed via
Lobatto quadrature remains constant for this choice of $c$. The results obtained
by using a Gauß-Legendre basis look very much the same at this resolution and
are consequently not printed.

In the CPR framework, the solution is approximated as a piecewise
polynomial function. Thus, derived quantities like norms are computed exactly
of approximately for these polynomials on each element. Therefore, Gauß-Legendre
or Lobatto-Legendre quadrature rules are natural choices to compute $L_2$ norms.
However, as shown in the previous sections, each choice of correction parameter
for a CPR method is associated with a natural norm / scalar product, given
by $\mat{M} + \mat{K}$. For $c = c_0 = 0$ and $c = c_{Hu}$, these scalar
products are given by Gauß and Lobatto quadrature, respectively. Using a central
flux, energy \emph{in this specific norm} is conserved. By equivalence of norms
in finite dimensional spaces, energy computed via other quadrature rules is
bounded, but not necessarily conserved or non-increasing. This can be seen in
Figures \ref{fig:vincent2011newclass-lobatto-central-2} and
\ref{fig:vincent2011newclass-lobatto-central-3}: The natural quadrature rules
(Gauß for $c = c_0$ and Lobatto for $c = c_{Hu}$) yield exactly conserved
energy, whereas other quadrature rules result in bounded oscillations of energy.
Computing the norms $\norm{\cdot}_{M+K}$ for $c = c_-/2$ and $c = c_{SD}$,
the same conservation of energy is obtained, but not plotted here. However,
as the solution $\vec{u}$ in each element represents a polynomial and not
just some point values as in traditional FD methods, Gauß and Lobatto
integration are standard choices to compute $L_2$ norms.

Using an upwind flux instead of a central flux, the corresponding results are
shown in Figures \ref{fig:vincent2011newclass-lobatto-upwind-1},
\ref{fig:vincent2011newclass-lobatto-upwind-2} and
\ref{fig:vincent2011newclass-lobatto-upwind-3}. As before, the
results look like the ones of \cite{vincent2011newclass}. The only
difference is a stronger dissipation of energy for $c = c_- / 2$ and
$c = c_{Hu}$. This may be caused by the different time integrator and unknown
values for the time steps used in \cite{vincent2011newclass}.
Again, the results obtained by using a Gauß-Legendre basis look very much the
same.

As before, only the energy computed via the norm associated with the
chosen correction is necessarily non-increasing. Due to dissipation by the
upwind numerical flux, the corresponding energy decays. Other choices of
quadrature rules still yield oscillating energy, decaying in the large.

A convergence study for a fixed number of elements $N = 10$ and varying
polynomial degree $p$ is plotted in Figure \ref{fig:vincent2011newclass-p-10}.
The corresponding numerical values (rounded to two significant digits)
are printed in Table \ref{tab:vincent2011newclass-p-10}.
Both Gauß-Legendre and Lobatto-Legendre bases with an upwind numerical flux
for different values of $c$ are compared. In addition, the natural choice
$\kappa = 0$ for each basis is considered, i.e. $c_0$ and $c_{Hu}$ for Gauß
and Lobatto quadrature, respectively. Nearly all parameters are the same as
before, but the number of time steps is increased to $50,000$. For fixed $c$,
the results for Gauß-Legendre and Lobatto-Legendre are similar, but for the
natural choice $\kappa = 0$, Gauß-Legendre is clearly superior. All plots show
clearly an approximately exponential decay of the error with increasing $p$ up
to about $p = 10$. There, higher precision than $64$ bit floating point will
probably lead to further decay.

Similar plots for a fixed polynomial degree $p = 4$ and varying number of
elements $N$ can be found in Figure \ref{fig:vincent2011newclass-N-4}
with corresponding numerical values (rounded to two significant digits) in Table
\ref{tab:vincent2011newclass-N-4}.
As before, for fixed $c$, the results are similar but for the natural choice
$\kappa = 0$, the Gauß-Legendre basis is clearly superior.
Note that both studies used the same total number of degrees of freedom and
a high polynomial degree is superior compared to more number of elements for
high precision. In this study, the limit error is not reached for any of the
plotted number of elements.

\begin{figure}[!hp]
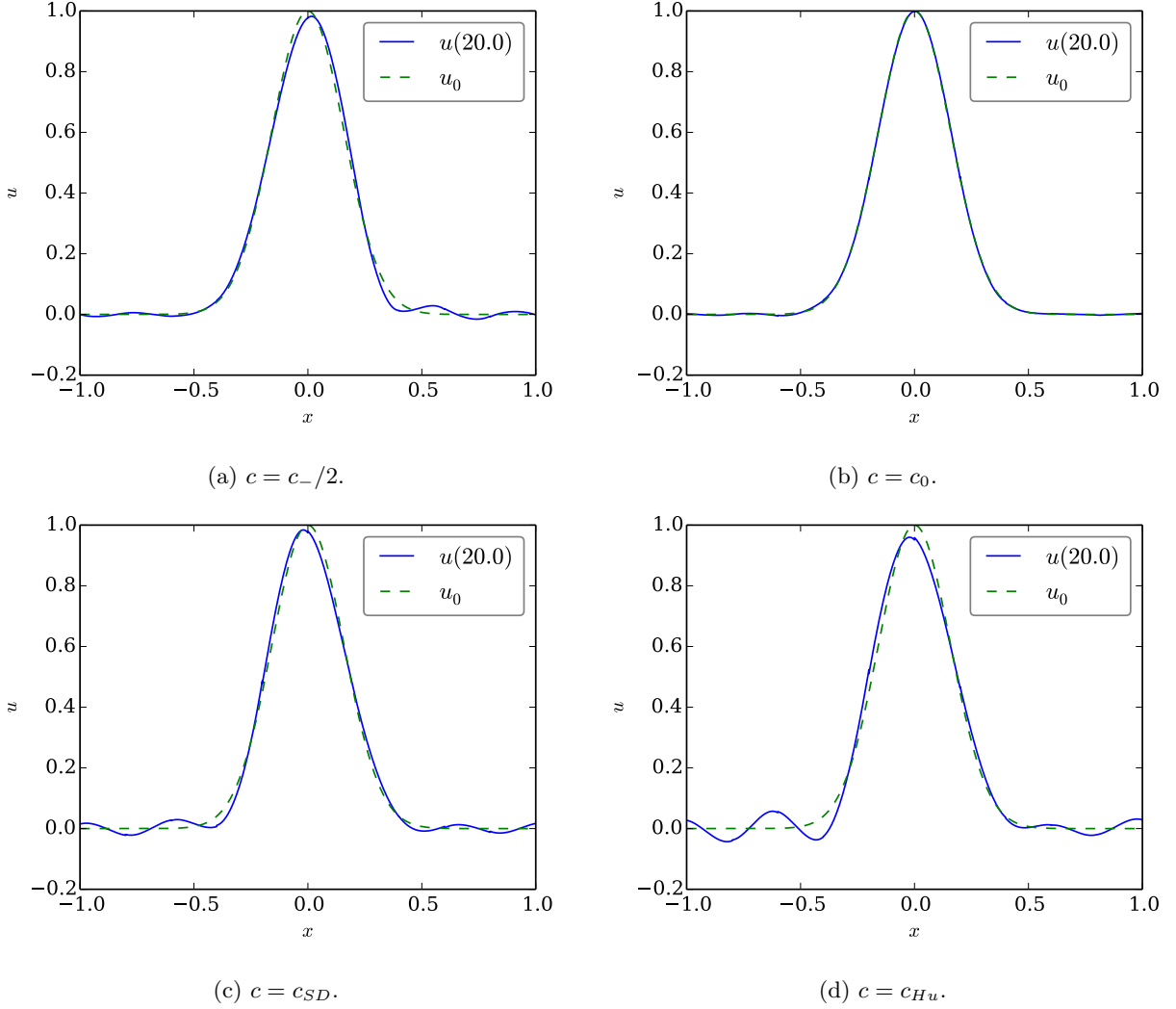

  \centering
  \begin{subfigure}[b]{0.49\textwidth}
    \includegraphics[width=\textwidth]{%
      figures/vincent2011newclass/const_flux/Lobatto_Legendre_3_10_cminus_a_central_1}
    \caption{$c = c_- / 2$.}
  \end{subfigure}%
  ~
  \begin{subfigure}[b]{0.49\textwidth}
    \includegraphics[width=\textwidth]{%
      figures/vincent2011newclass/const_flux/Lobatto_Legendre_3_10_c0_a_central_1}
    \caption{$c = c_0$.}
  \end{subfigure}%
  \\
  \begin{subfigure}[b]{0.49\textwidth}
    \includegraphics[width=\textwidth]{%
      figures/vincent2011newclass/const_flux/Lobatto_Legendre_3_10_cSD_a_central_1}
    \caption{$c = c_{SD}$.}
  \end{subfigure}%
  ~
  \begin{subfigure}[b]{0.49\textwidth}
    \includegraphics[width=\textwidth]{%
      figures/vincent2011newclass/const_flux/Lobatto_Legendre_3_10_cHU_a_central_1}
    \caption{$c = c_{Hu}$.}
  \end{subfigure}%
  \caption{The numerical solution of constant velocity linear advection
            using SBP CPR methods with $10$ elements, a Lobatto-Legendre
            basis of order $p = 3$ and a central numerical flux.
            Different values of $c$ are used for the correction matrix
            $\mat{C}$. The initial Gaussian profile $u_0$ is shown in green,
            the numerical solution is plotted in blue.}
  \label{fig:vincent2011newclass-lobatto-central-1}
\end{figure}

\begin{figure}[!hp]
  \centering
  \begin{subfigure}[b]{0.49\textwidth}
    \includegraphics[width=\textwidth]{%
      figures/vincent2011newclass/const_flux/Lobatto_Legendre_3_10_cminus_a_central_2}
    \caption{$c = c_- / 2$.}
  \end{subfigure}%
  ~
  \begin{subfigure}[b]{0.49\textwidth}
    \includegraphics[width=\textwidth]{%
      figures/vincent2011newclass/const_flux/Lobatto_Legendre_3_10_c0_a_central_2}
    \caption{$c = c_0$.}
  \end{subfigure}%
  \\
  \begin{subfigure}[b]{0.49\textwidth}
    \includegraphics[width=\textwidth]{%
      figures/vincent2011newclass/const_flux/Lobatto_Legendre_3_10_cSD_a_central_2}
    \caption{$c = c_{SD}$.}
  \end{subfigure}%
  ~
  \begin{subfigure}[b]{0.49\textwidth}
    \includegraphics[width=\textwidth]{%
      figures/vincent2011newclass/const_flux/Lobatto_Legendre_3_10_cHU_a_central_2}
    \caption{$c = c_{Hu}$.}
  \end{subfigure}%
  \caption{Energy of numerical solution of constant velocity linear
            advection using SBP CPR methods with $10$ elements, a Lobatto-Legendre
            basis of order $p = 3$ and a central numerical flux.
            Different values of $c$ are used for the correction matrix
            $\mat{C}$. The discrete energy $\norm{\vec{u}}^2$ is computed using
            Gauß-Legendre (blue) and Lobatto-Legendre (green) quadrature with
            $p + 1 = 4$ nodes in the full time interval $[0, 20]$.}
  \label{fig:vincent2011newclass-lobatto-central-2}
\end{figure}

\begin{figure}[!hp]
  \centering
  \begin{subfigure}[b]{0.49\textwidth}
    \includegraphics[width=\textwidth]{%
      figures/vincent2011newclass/const_flux/Lobatto_Legendre_3_10_cminus_a_central_3}
    \caption{$c = c_- / 2$.}
  \end{subfigure}%
  ~
  \begin{subfigure}[b]{0.49\textwidth}
    \includegraphics[width=\textwidth]{%
      figures/vincent2011newclass/const_flux/Lobatto_Legendre_3_10_c0_a_central_3}
    \caption{$c = c_0$.}
  \end{subfigure}%
  \\
  \begin{subfigure}[b]{0.49\textwidth}
    \includegraphics[width=\textwidth]{%
      figures/vincent2011newclass/const_flux/Lobatto_Legendre_3_10_cSD_a_central_3}
    \caption{$c = c_{SD}$.}
  \end{subfigure}%
  ~
  \begin{subfigure}[b]{0.49\textwidth}
    \includegraphics[width=\textwidth]{%
      figures/vincent2011newclass/const_flux/Lobatto_Legendre_3_10_cHU_a_central_3}
    \caption{$c = c_{Hu}$.}
  \end{subfigure}%
  \caption{Energy of numerical solution of constant velocity linear
            advection using SBP CPR methods with $10$ elements, a Lobatto-Legendre
            basis of order $p = 3$ and a central numerical flux.
            Different values of $c$ are used for the correction matrix
            $\mat{C}$. The discrete energy $\norm{\vec{u}}^2$ is computed using
            Gauß-Legendre (blue) and Lobatto-Legendre (green) quadrature with
            $p + 1 = 4$ nodes in the time interval $[0, 0.8]$.}
  \label{fig:vincent2011newclass-lobatto-central-3}
\end{figure}

\begin{figure}[!hp]
  \centering
  \begin{subfigure}[b]{0.49\textwidth}
    \includegraphics[width=\textwidth]{%
      figures/vincent2011newclass/const_flux/Lobatto_Legendre_3_10_cminus_an_upwind_1}
    \caption{$c = c_- / 2$.}
  \end{subfigure}%
  ~
  \begin{subfigure}[b]{0.49\textwidth}
    \includegraphics[width=\textwidth]{%
      figures/vincent2011newclass/const_flux/Lobatto_Legendre_3_10_c0_an_upwind_1}
    \caption{$c = c_0$.}
  \end{subfigure}%
  \\
  \begin{subfigure}[b]{0.49\textwidth}
    \includegraphics[width=\textwidth]{%
      figures/vincent2011newclass/const_flux/Lobatto_Legendre_3_10_cSD_an_upwind_1}
    \caption{$c = c_{SD}$.}
  \end{subfigure}%
  ~
  \begin{subfigure}[b]{0.49\textwidth}
    \includegraphics[width=\textwidth]{%
      figures/vincent2011newclass/const_flux/Lobatto_Legendre_3_10_cHU_an_upwind_1}
    \caption{$c = c_{Hu}$.}
  \end{subfigure}%
  \caption{The numerical solution of constant velocity linear advection
            using SBP CPR methods with $10$ elements, a Lobatto-Legendre
            basis of order $p = 3$ and a upwind numerical flux.
            Different values of $c$ are used for the correction matrix
            $\mat{C}$. The initial Gaussian profile $u_0$ is shown in green,
            the numerical solution is plotted in blue.}
  \label{fig:vincent2011newclass-lobatto-upwind-1}
\end{figure}

\begin{figure}[!hp]
  \centering
  \begin{subfigure}[b]{0.49\textwidth}
    \includegraphics[width=\textwidth]{%
      figures/vincent2011newclass/const_flux/Lobatto_Legendre_3_10_cminus_an_upwind_2}
    \caption{$c = c_- / 2$.}
  \end{subfigure}%
  ~
  \begin{subfigure}[b]{0.49\textwidth}
    \includegraphics[width=\textwidth]{%
      figures/vincent2011newclass/const_flux/Lobatto_Legendre_3_10_c0_an_upwind_2}
    \caption{$c = c_0$.}
  \end{subfigure}%
  \\
  \begin{subfigure}[b]{0.49\textwidth}
    \includegraphics[width=\textwidth]{%
      figures/vincent2011newclass/const_flux/Lobatto_Legendre_3_10_cSD_an_upwind_2}
    \caption{$c = c_{SD}$.}
  \end{subfigure}%
  ~
  \begin{subfigure}[b]{0.49\textwidth}
    \includegraphics[width=\textwidth]{%
      figures/vincent2011newclass/const_flux/Lobatto_Legendre_3_10_cHU_an_upwind_2}
    \caption{$c = c_{Hu}$.}
  \end{subfigure}%
  \caption{Energy of numerical solution of constant velocity linear
            advection using SBP CPR methods with $10$ elements, a Lobatto-Legendre
            basis of order $p = 3$ and a upwind numerical flux.
            Different values of $c$ are used for the correction matrix
            $\mat{C}$. The discrete energy $\norm{\vec{u}}^2$ is computed using
            Gauß-Legendre (blue) and Lobatto-Legendre (green) quadrature with
            $p + 1 = 4$ nodes in the full time interval $[0, 20]$.}
  \label{fig:vincent2011newclass-lobatto-upwind-2}
\end{figure}

\begin{figure}[!hp]
  \centering
  \begin{subfigure}[b]{0.49\textwidth}
    \includegraphics[width=\textwidth]{%
      figures/vincent2011newclass/const_flux/Lobatto_Legendre_3_10_cminus_an_upwind_3}
    \caption{$c = c_- / 2$.}
  \end{subfigure}%
  ~
  \begin{subfigure}[b]{0.49\textwidth}
    \includegraphics[width=\textwidth]{%
      figures/vincent2011newclass/const_flux/Lobatto_Legendre_3_10_c0_an_upwind_3}
    \caption{$c = c_0$.}
  \end{subfigure}%
  \\
  \begin{subfigure}[b]{0.49\textwidth}
    \includegraphics[width=\textwidth]{%
      figures/vincent2011newclass/const_flux/Lobatto_Legendre_3_10_cSD_an_upwind_3}
    \caption{$c = c_{SD}$.}
  \end{subfigure}%
  ~
  \begin{subfigure}[b]{0.49\textwidth}
    \includegraphics[width=\textwidth]{%
      figures/vincent2011newclass/const_flux/Lobatto_Legendre_3_10_cHU_an_upwind_3}
    \caption{$c = c_{Hu}$.}
  \end{subfigure}%
  \caption{Energy of numerical solution of constant velocity linear
            advection using SBP CPR methods with $10$ elements, a Lobatto-Legendre
            basis of order $p = 3$ and a upwind numerical flux.
            Different values of $c$ are used for the correction matrix
            $\mat{C}$. The discrete energy $\norm{\vec{u}}^2$ is computed using
            Gauß-Legendre (blue) and Lobatto-Legendre (green) quadrature with
            $p + 1 = 4$ nodes in the time interval $[0, 0.8]$.}
  \label{fig:vincent2011newclass-lobatto-upwind-3}
\end{figure}

\begin{figure}[!hp]
  \centering
  \begin{subfigure}[b]{0.45\textwidth}
    \includegraphics[width=\textwidth]{%
      figures/vincent2011newclass/convergence/p_10_cminus_an_upwind_50000}
    \caption{$c = c_- / 2$}
  \end{subfigure}%
  ~
  \begin{subfigure}[b]{0.45\textwidth}
    \includegraphics[width=\textwidth]{%
      figures/vincent2011newclass/convergence/p_10_c0_an_upwind_50000}
    \caption{$c = c_0$}
  \end{subfigure}%
  ~\\
  \begin{subfigure}[b]{0.45\textwidth}
    \includegraphics[width=\textwidth]{%
      figures/vincent2011newclass/convergence/p_10_cSD_an_upwind_50000}
    \caption{$c = c_{SD}$}
  \end{subfigure}%
  ~
  \begin{subfigure}[b]{0.45\textwidth}
    \includegraphics[width=\textwidth]{%
      figures/vincent2011newclass/convergence/p_10_cHU_an_upwind_50000}
    \caption{$c = c_{Hu}$}
  \end{subfigure}%
  ~\\
  \begin{subfigure}[b]{0.45\textwidth}
    \includegraphics[width=\textwidth]{%
      figures/vincent2011newclass/convergence/p_10_kappa0_an_upwind_50000}
    \caption{$\kappa = 0$}
  \end{subfigure}%
  \caption{$L^2$ errors of $u(20)$ for constant velocity linear advection
            using SBP CPR methods with $N = 10$ elements, a Gauß-Legendre and
            Lobatto-Legendre bases of varying order $p$ and an upwind numerical
            flux. Different values of $c$ are used for the correction matrix
            $\mat{C}$.}
  \label{fig:vincent2011newclass-p-10}
\end{figure}

\begin{sidewaystable}[!hp]
  \caption{$L^2$ errors of $u(20)$ for constant velocity linear advection
            using SBP CPR methods with $N = 10$ elements, Gauß-Legendre and
            Lobatto-Legendre bases of varying order $p$ and an upwind numerical
            flux. Different values of $c$ are used for the correction matrix
            $\mat{C}$. Numerical values corresponding to Figure
            \ref{fig:vincent2011newclass-p-10}.}
  \label{tab:vincent2011newclass-p-10}
  \centering
  \begin{tabular}{ccccccccc}
  \toprule
  & \multicolumn{8}{c}{$\norm{u - u_0}_{L^2}$}
  \\
  \cmidrule(r){2-9}
  & \multicolumn{2}{c}{$c = c_-/2$} & \multicolumn{2}{c}{$c = c_0$} & \multicolumn{2}{c}{$c = c_{SD}$} & \multicolumn{2}{c}{$c = c_{Hu}$}
  \\
  \cmidrule(r){2-3} \cmidrule(r){4-5} \cmidrule(r){6-7} \cmidrule(r){8-9}
  $p$ & Gauß & Lobatto & Gauß & Lobatto & Gauß & Lobatto & Gauß & Lobatto
  \\
  \midrule
  \num{1} & \num{6.88e-01} & \num{6.86e-01} & \num{4.00e-01} & \num{4.03e-01} & \num{5.55e-01} & \num{5.59e-01} & \num{7.40e-01} & \num{7.41e-01} \\
  \num{2} & \num{2.49e-01} & \num{2.47e-01} & \num{1.12e-01} & \num{1.09e-01} & \num{2.17e-01} & \num{2.16e-01} & \num{2.75e-01} & \num{2.74e-01} \\
  \num{3} & \num{3.51e-02} & \num{3.55e-02} & \num{1.36e-02} & \num{1.40e-02} & \num{3.70e-02} & \num{3.74e-02} & \num{5.22e-02} & \num{5.27e-02} \\
  \num{4} & \num{2.16e-03} & \num{2.15e-03} & \num{8.38e-04} & \num{8.75e-04} & \num{2.98e-03} & \num{3.01e-03} & \num{4.26e-03} & \num{4.28e-03} \\
  \num{5} & \num{8.92e-05} & \num{8.93e-05} & \num{3.36e-05} & \num{3.38e-05} & \num{1.42e-04} & \num{1.42e-04} & \num{1.98e-04} & \num{1.98e-04} \\
  \num{6} & \num{3.56e-06} & \num{3.77e-06} & \num{2.73e-06} & \num{4.71e-06} & \num{7.16e-06} & \num{9.81e-06} & \num{9.04e-06} & \num{1.18e-05} \\
  \num{7} & \num{1.39e-07} & \num{1.48e-07} & \num{1.32e-07} & \num{1.95e-07} & \num{2.97e-07} & \num{3.93e-07} & \num{3.61e-07} & \num{4.64e-07} \\
  \num{8} & \num{1.90e-08} & \num{2.28e-08} & \num{2.52e-08} & \num{4.46e-08} & \num{5.25e-08} & \num{8.38e-08} & \num{6.03e-08} & \num{9.40e-08} \\
  \num{9} & \num{1.33e-09} & \num{1.55e-09} & \num{1.71e-09} & \num{2.89e-09} & \num{3.53e-09} & \num{5.45e-09} & \num{4.00e-09} & \num{6.05e-09} \\
  \num{10} & \num{2.73e-10} & \num{2.88e-10} & \num{2.95e-10} & \num{4.13e-10} & \num{4.77e-10} & \num{7.05e-10} & \num{5.23e-10} & \num{7.68e-10} \\
  \num{11} & \num{2.15e-10} & \num{2.15e-10} & \num{2.09e-10} & \num{2.10e-10} & \num{2.13e-10} & \num{2.17e-10} & \num{2.14e-10} & \num{2.18e-10} \\
  \num{12} & \num{2.09e-10} & \num{2.09e-10} & \num{2.04e-10} & \num{2.04e-10} & \num{2.05e-10} & \num{2.05e-10} & \num{2.05e-10} & \num{2.06e-10} \\
  \num{13} & \num{2.04e-10} & \num{2.04e-10} & \num{2.01e-10} & \num{2.01e-10} & \num{2.01e-10} & \num{2.01e-10} & \num{2.02e-10} & \num{2.02e-10} \\
  \num{14} & \num{2.01e-10} & \num{2.01e-10} & \num{1.98e-10} & \num{1.98e-10} & \num{1.99e-10} & \num{1.99e-10} & \num{1.99e-10} & \num{1.99e-10} \\
  \num{15} & \num{1.98e-10} & \num{1.98e-10} & \num{1.96e-10} & \num{1.96e-10} & \num{1.97e-10} & \num{1.97e-10} & \num{1.97e-10} & \num{1.97e-10} \\
  \num{16} & \num{1.96e-10} & \num{1.96e-10} & \num{1.95e-10} & \num{1.95e-10} & \num{1.95e-10} & \num{1.95e-10} & \num{1.95e-10} & \num{1.95e-10} \\
  \bottomrule
  \end{tabular}
\end{sidewaystable}

\begin{figure}[!hp]
  \centering
  \begin{subfigure}[b]{0.45\textwidth}
    \includegraphics[width=\textwidth]{%
      figures/vincent2011newclass/convergence/N_4_cminus_an_upwind_50000}
    \caption{$c = c_- / 2$}
  \end{subfigure}%
  ~
  \begin{subfigure}[b]{0.45\textwidth}
    \includegraphics[width=\textwidth]{%
      figures/vincent2011newclass/convergence/N_4_c0_an_upwind_50000}
    \caption{$c = c_0$}
  \end{subfigure}%
  ~\\
  \begin{subfigure}[b]{0.45\textwidth}
    \includegraphics[width=\textwidth]{%
      figures/vincent2011newclass/convergence/N_4_cSD_an_upwind_50000}
    \caption{$c = c_{SD}$}
  \end{subfigure}%
  ~
  \begin{subfigure}[b]{0.45\textwidth}
    \includegraphics[width=\textwidth]{%
      figures/vincent2011newclass/convergence/N_4_cHU_an_upwind_50000}
    \caption{$c = c_{Hu}$}
  \end{subfigure}%
  ~\\
  \begin{subfigure}[b]{0.45\textwidth}
    \includegraphics[width=\textwidth]{%
      figures/vincent2011newclass/convergence/N_4_kappa0_an_upwind_50000}
    \caption{$\kappa = 0$}
  \end{subfigure}%
  \caption{$L^2$ errors of $u(20)$ for constant velocity linear advection
            using SBP CPR methods with varying number of elements $N$,
            Gauß-Legendre and Lobatto-Legendre bases of order $p = 4$ and an
            upwind numerical flux. Different values of $c$ are used for the
            correction matrix $\mat{C}$.}
  \label{fig:vincent2011newclass-N-4}
\end{figure}

\begin{sidewaystable}[!hp]
  \caption{$L^2$ errors of $u(20)$ for constant velocity linear advection
            using SBP CPR methods with varying number of elements $N$,
            Gauß-Legendre and Lobatto-Legendre bases of order $p = 4$ and an
            upwind numerical flux. Different values of $c$ are used for the
            correction matrix $\mat{C}$. Numerical values corresponding to Figure
            \ref{fig:vincent2011newclass-N-4}.}
  \label{tab:vincent2011newclass-N-4}
  \centering
  \begin{tabular}{ccccccccc}
  \toprule
  & \multicolumn{8}{c}{$\norm{u - u_0}_{L^2}$}
  \\
  \cmidrule(r){2-9}
  & \multicolumn{2}{c}{$c = c_-/2$} & \multicolumn{2}{c}{$c = c_0$} & \multicolumn{2}{c}{$c = c_{SD}$} & \multicolumn{2}{c}{$c = c_{Hu}$}
  \\
  \cmidrule(r){2-3} \cmidrule(r){4-5} \cmidrule(r){6-7} \cmidrule(r){8-9}
  $N$ & Gauß & Lobatto & Gauß & Lobatto & Gauß & Lobatto & Gauß & Lobatto
  \\
  \midrule
  \num{4} & \num{1.34e-01} & \num{1.39e-01} & \num{9.48e-02} & \num{9.99e-02} & \num{1.35e-01} & \num{1.40e-01} & \num{1.57e-01} & \num{1.62e-01} \\
  \num{6} & \num{4.37e-02} & \num{4.38e-02} & \num{2.09e-02} & \num{2.12e-02} & \num{4.05e-02} & \num{4.08e-02} & \num{5.04e-02} & \num{5.07e-02} \\
  \num{8} & \num{9.46e-03} & \num{9.44e-03} & \num{4.06e-03} & \num{4.13e-03} & \num{1.11e-02} & \num{1.11e-02} & \num{1.49e-02} & \num{1.49e-02} \\
  \num{10} & \num{2.16e-03} & \num{2.15e-03} & \num{8.38e-04} & \num{8.75e-04} & \num{2.98e-03} & \num{3.01e-03} & \num{4.26e-03} & \num{4.28e-03} \\
  \num{12} & \num{5.73e-04} & \num{5.71e-04} & \num{2.02e-04} & \num{2.23e-04} & \num{8.65e-04} & \num{8.80e-04} & \num{1.28e-03} & \num{1.30e-03} \\
  \num{14} & \num{1.80e-04} & \num{1.80e-04} & \num{5.95e-05} & \num{7.20e-05} & \num{2.83e-04} & \num{2.91e-04} & \num{4.27e-04} & \num{4.35e-04} \\
  \num{16} & \num{6.50e-05} & \num{6.54e-05} & \num{2.18e-05} & \num{3.01e-05} & \num{1.05e-04} & \num{1.11e-04} & \num{1.59e-04} & \num{1.65e-04} \\
  \num{18} & \num{2.64e-05} & \num{2.68e-05} & \num{9.75e-06} & \num{1.52e-05} & \num{4.36e-05} & \num{4.80e-05} & \num{6.60e-05} & \num{7.04e-05} \\
  \num{20} & \num{1.19e-05} & \num{1.21e-05} & \num{5.12e-06} & \num{8.64e-06} & \num{2.01e-05} & \num{2.34e-05} & \num{3.03e-05} & \num{3.35e-05} \\
  \num{22} & \num{5.79e-06} & \num{5.99e-06} & \num{3.00e-06} & \num{5.29e-06} & \num{1.02e-05} & \num{1.27e-05} & \num{1.52e-05} & \num{1.77e-05} \\
  \num{24} & \num{3.05e-06} & \num{3.20e-06} & \num{1.88e-06} & \num{3.41e-06} & \num{5.65e-06} & \num{7.45e-06} & \num{8.24e-06} & \num{1.01e-05} \\
  \num{26} & \num{1.72e-06} & \num{1.84e-06} & \num{1.24e-06} & \num{2.28e-06} & \num{3.36e-06} & \num{4.68e-06} & \num{4.81e-06} & \num{6.22e-06} \\
  \num{28} & \num{1.03e-06} & \num{1.13e-06} & \num{8.51e-07} & \num{1.57e-06} & \num{2.12e-06} & \num{3.10e-06} & \num{2.99e-06} & \num{4.05e-06} \\
  \num{30} & \num{6.51e-07} & \num{7.29e-07} & \num{6.00e-07} & \num{1.11e-06} & \num{1.41e-06} & \num{2.13e-06} & \num{1.96e-06} & \num{2.75e-06} \\
  \num{32} & \num{4.32e-07} & \num{4.93e-07} & \num{4.34e-07} & \num{8.08e-07} & \num{9.76e-07} & \num{1.52e-06} & \num{1.34e-06} & \num{1.94e-06} \\
  \num{34} & \num{2.98e-07} & \num{3.46e-07} & \num{3.20e-07} & \num{5.97e-07} & \num{6.98e-07} & \num{1.11e-06} & \num{9.49e-07} & \num{1.41e-06} \\
  \bottomrule
  \end{tabular}
\end{sidewaystable}

\subsection{Influence of time discretisation}

Leaving the mathematical paradise of the previous sections and entering the
real world of numerical methods, time discretisation plays an important role.
For simplicity, an explicit Euler method for an SBP CPR
semidiscretisation of the linear advection equation
\eqref{eq:linear-constant-advection} is considered. Thus, one step of size
$\Delta t$ from $\vec{u}$ to $\vec{u}^+$ (in the standard element) can be
written as
\begin{equation}
 \vec{u}^+ = \vec{u} + \Delta t \, \partial_t \vec{u}.
\label{eq:explicit-euler}
\end{equation}
Thus, the discrete norm after one time step is
\begin{equation}
\begin{aligned}
  {\vec{u}^+}^T \mat{M} \vec{u}^+
  &= \left( \vec{u} + \Delta t \, \partial_t \vec{u} \right)^T
     \mat{M}
     \left( \vec{u} + \Delta t \, \partial_t \vec{u} \right)
\\&= \vec{u}^T \mat{M} \vec{u}
     + 2 \Delta t \, \vec{u}^T \mat{M} \partial_t \vec{u}
     + \Delta t^2 \, \partial_t \vec{u}^T \mat{M} \partial_t \vec{u}.
\end{aligned}
\end{equation}
The computations leading to Lemma \ref{lem:CPR-linearly-stable} result in
an estimate of the second term $\vec{u}^T \mat{M} \partial_t \vec{u} \leq 0$.
Since $\mat{M}$ is positive definite, the third term is non-negative
(for $\Delta t > 0$). Thus, time discretisation introduces a growth of the
discrete norm not considered in the previous calculations, possibly leading
to stability issues.
In order to get a decay of the discrete norm, the difference
\begin{equation}
  {\vec{u}^+}^T \mat{M} \vec{u}^+ - \vec{u}^T \mat{M} \vec{u}
  = 2 \Delta t \left( \vec{u} + \frac{\Delta t}{2} \partial_t \vec{u} \right)^T
    \mat{M} \partial_t \vec{u}
\end{equation}
must be estimated. Inserting the SBP CPR semidiscretisation for the linear
advection equation with constant velocity yields
\begin{equation}
\begin{aligned}
  &\left( \vec{u} + \frac{\Delta t}{2} \partial_t \vec{u} \right)^T
    \mat{M} \partial_t \vec{u}
\\&= \left( \vec{u} - \frac{\Delta t}{2} \mat{D} \vec{u}
          - \frac{\Delta t}{2} \mat{C} (\vec{f}^{num} - \mat{R} \vec{u}) \right)^T
      \mat{M}
      \left( -\mat{D} \vec{u} - \mat{C} (\vec{f}^{num} - \mat{R} \vec{u})\right)
\\&= -\vec{u}^T \mat{M} \mat{D} \vec{u}
     - \vec{u}^T \mat{M} \mat{C} (\vec{f}^{num} - \mat{R} \vec{u})
\\&\quad+ \frac{\Delta t}{2} \left[
       \vec{u}^T \mat{D}^T \mat{M} \mat{D} \vec{u}
       + (\vec{f}^{num} - \mat{R} \vec{u})^T \mat{C}^T \mat{M} \mat{C} 
         (\vec{f}^{num} - \mat{R} \vec{u})
       \right] 
\\&\quad \left. + 2 \vec{u}^T \mat{D}^T \mat{M} \mat{C} (\vec{f}^{num} - \mat{R} \vec{u})
     \right].
\end{aligned}
\end{equation}
Inserting $\mat{C} = \mat{M}^{-1} \mat{R}^T \mat{B}$, the right hand side becomes
\begin{equation}
\begin{aligned}
  &-\vec{u}^T \mat{M} \mat{D} \vec{u}
     - \vec{u}^T \mat{R}^T \mat{B} (\vec{f}^{num} - \mat{R} \vec{u})
\\&+ \frac{\Delta t}{2} \left[
       \vec{u}^T \mat{D}^T \mat{M} \mat{D} \vec{u}
       + (\vec{f}^{num} - \mat{R} \vec{u})^T \mat{B}^T \mat{R} \mat{M}^{-1}
         \mat{R}^T \mat{B} (\vec{f}^{num} - \mat{R} \vec{u}) \right.
\\& \left. + 2 \vec{u}^T \mat{D}^T \mat{R}^T \mat{B} (\vec{f}^{num} - \mat{R} \vec{u})
     \right].
\end{aligned}
\end{equation}
Using the SBP property
$\mat{M} \mat{D} + \mat{D}^T \mat{M} = \mat{R}^T \mat{B} \mat{R}$ results in
\begin{equation}
\begin{aligned}
  &\vec{u}^T \mat{D}^T \mat{M} \vec{u}
   - \vec{u}^T \mat{R}^T \mat{B} \mat{R} \vec{u}
   - \vec{u}^T \mat{R}^T \mat{B} (\vec{f}^{num} - \mat{R} \vec{u})
\\&+ \frac{\Delta t}{2} \left[
       \vec{u}^T \mat{D}^T \mat{M} \mat{D} \vec{u}
       + (\vec{f}^{num} - \mat{R} \vec{u})^T \mat{B}^T \mat{R} \mat{M}^{-1}
         \mat{R}^T \mat{B} (\vec{f}^{num} - \mat{R} \vec{u}) \right.
\\&+ \left. + 2 \vec{u}^T \mat{D}^T \mat{R}^T \mat{B} (\vec{f}^{num} - \mat{R} \vec{u})
     \right].
\end{aligned}
\end{equation}
Thus, adding these expressions, the left hand side $LHS$ can be expressed as
\begin{equation}
\begin{aligned}
  &2 \, LHS
  =  \vec{u}^T \mat{R}^T \mat{B} \mat{R} \vec{u}
      - 2 \vec{u}^T \mat{R}^T \mat{B} \vec{f}^{num}
\\&\quad+ \Delta t \left[
        \vec{u}^T \mat{D}^T \mat{M} \mat{D} \vec{u}
        + (\vec{f}^{num} - \mat{R} \vec{u})^T \mat{B}^T \mat{R} \mat{M}^{-1}
          \mat{R}^T \mat{B} (\vec{f}^{num} - \mat{R} \vec{u}) \right.
\\&\quad \left. + 2 \vec{u}^T \mat{D}^T \mat{R}^T \mat{B} (\vec{f}^{num} - \mat{R} \vec{u})
      \right].
\end{aligned}
\end{equation}
If all summands on the right hand side would involve only boundary terms,
an estimate similar to those in previous sections would be possible.
Unfortunately, the volume term $\vec{u}^T \mat{D}^T \mat{M} \mat{D} \vec{u}$
does not seem to allow such an estimate. Therefore, an estimate leading to
fully discrete stability for an explicit Euler method does not seem to be
possible in this straightforward calculation. Thus, for practical
calculations, a time discretisation with high accuracy should be chosen in
order to avoid stability issues. Note that the non-positive stability result
for the explicit Euler methods extends directly to standard SSP time
discretisations consisting of convex combinations of Euler steps.

\section{CPR methods using SBP operators: Burgers' equation }\label{Chapter_4}

Stability properties for linear and nonlinear problems can be very different.
Thus, although stability for linear advection with constant velocity can be
proven for several SBP CPR schemes (see Theorem \ref{thm:CPR-1D-linear}),
these results do not imply nonlinear stability. For simplicity, the (inviscid)
Burgers' equation
\begin{equation}
  \partial_t u + \partial_x \frac{u^2}{2} = 0
\label{eq:inviscid-burgers}
\end{equation}
in one space dimension with periodic boundary conditions and appropriate
initial condition is considered.

\subsection{Nonlinear stability}

A straightforward application of an SBP CPR method can be written as
\begin{equation}
  \partial_t \vec{u} + \frac{1}{2} \mat{D} \vec{u^2}
  + \mat{C} ( \vec{f}^{num} - \frac{1}{2} \mat{R} \vec{u^2}) = 0
\end{equation}
for the standard element. Estimating the discrete norm similar to the previous
sections results in
\begin{equation}
  \vec{u}^T \mat{M} \partial_t \vec{u}
  =
    - \frac{1}{2} \vec{u}^T \mat{M} \mat{D} \vec{u^2}
    -  \vec{u}^T \mat{M} \mat{C} ( \vec{f}^{num} - \frac{1}{2} \mat{R} \vec{u^2}).
\end{equation}
Applying the SBP property and $\mat{M} \mat{C} = \mat{R}^T \mat{B}$ yields
\begin{equation}
  \frac{1}{2} \od{}{t} \norm{u}_M^2
  = 
    \frac{1}{2} \vec{u}^T \mat{D}^T \mat{M} \vec{u^2}
    - \frac{1}{2} \vec{u}^T \mat{R}^T \mat{B} \mat{R} \vec{u^2}
    - \vec{u}^T \mat{R}^T \mat{B} ( \vec{f}^{num} - \frac{1}{2} \mat{R} \vec{u^2}).
\end{equation}
Unfortunately, the nonlinear flux does not allow a cancellation of boundary
terms as in the linear case. A possibility to overcome this problem in the
setting of DG spectral element methods was investigated by Gassner \cite{gassner2013skew}.
There, he uses Lobatto-Legendre interpolation polynomials as nodal basis
and a skew-symmetric form of the conservation law
\begin{equation}
  \partial_t u + \alpha \partial_x \frac{u^2}{2} + (1-\alpha) u \partial_x u = 0,
  \quad 0 \leq \alpha \leq 1.
\label{eq:inviscid-burgers-skew-symmetric}
\end{equation}
Thus, the divergence of the flux is written as a convex combination of the two
terms $\partial_x \frac{u^2}{2}$ and $u \partial_x u$ which are exactly equal
if the product rule of differentiation is valid for $\partial_x$. The discrete
derivative operator $\mat{D}$ does not fulfil this product rule and therefore,
the split operator form \eqref{eq:inviscid-burgers-skew-symmetric} can be
regarded as the standard conservative form \eqref{eq:inviscid-burgers}
with an additional correction term
\begin{equation}
  \partial_t u + \partial_x \frac{u^2}{2}
  + (1 - \alpha) \left( u \partial_x u - \partial_x \frac{u^2}{2} \right)
  = 0.
\label{eq:inviscid-burgers-corrected-divergence}
\end{equation}
Using the SBP CPR semidiscretisation for this equation results is
\begin{equation}
  \partial_t \vec{u}
  = - \frac{1}{2} \mat{D} \vec{u^2}
    - (1 - \alpha) (\mat{u} \mat{D} \vec{u} - \frac{1}{2} \mat{D} \vec{u^2})
    - \mat{C} ( \vec{f}^{num} - \frac{1}{2} \mat{R} \vec{u^2}) = 0.
\end{equation}
Here, $\mat{u} = \diag{\vec{u}}$ denotes the matrix representing multiplication
with $u$. Now, multiplication with $\vec{u}^T \mat{M}$ results in
\begin{equation}
  \frac{1}{2} \od{}{t} \norm{u}_M^2
  = 
    - \frac{\alpha}{2} \vec{u}^T \mat{M} \mat{D} \vec{u^2}
    - (1 - \alpha) \vec{u}^T \mat{M} \mat{u} \mat{D} \vec{u}
    -  \vec{u}^T \mat{M} \mat{C} ( \vec{f}^{num} - \frac{1}{2} \mat{R} \vec{u^2}).
\end{equation}
Using $\mat{C} = \mat{M}^{-1} \mat{R}^T \mat{B}$ and
$\vec{u^2} = \mat{u} \vec{u}$, this can be written as
\begin{equation}
\begin{aligned}
  \frac{1}{2} \od{}{t} \norm{u}_M^2
  = 
  & - \frac{\alpha}{2} \vec{u}^T \mat{M} \mat{D} \mat{u} \vec{u}
    - (1 - \alpha) \vec{u}^T \mat{M} \mat{u} \mat{D} \vec{u}
\\& -  \vec{u}^T \mat{R}^T \mat{B} \vec{f}^{num}
    + \frac{1}{2} \vec{u}^T \mat{R}^T \mat{B} \mat{R} \mat{u} \vec{u}.
\end{aligned}
\end{equation}
Since a nodal Lobatto-Legendre basis with lumped mass matrix is chosen,
both $\mat{u}$ and $\mat{M}$ are diagonal and therefore commute
\begin{equation}
\begin{aligned}
  \frac{1}{2} \od{}{t} \norm{u}_M^2
  = 
  & - \frac{\alpha}{2} \vec{u}^T \mat{M} \mat{D} \mat{u} \vec{u}
    - (1 - \alpha) \vec{u}^T \mat{u} \mat{M} \mat{D} \vec{u}
\\& -  \vec{u}^T \mat{R}^T \mat{B} \vec{f}^{num}
    + \frac{1}{2} \vec{u}^T \mat{R}^T \mat{B} \mat{R} \mat{u} \vec{u}.
\end{aligned}
\end{equation}
Application of the SBP property results in
\begin{equation}
\begin{aligned}
  \frac{1}{2} \od{}{t} \norm{u}_M^2
  = 
  &   \frac{\alpha}{2} \vec{u}^T \mat{D}^T \mat{M} \mat{u} \vec{u}
    - \frac{\alpha}{2} \vec{u}^T \mat{R}^T \mat{B} \mat{R} \mat{u} \vec{u}
    - (1 - \alpha) \vec{u}^T \mat{u} \mat{M} \mat{D} \vec{u}
\\& -  \vec{u}^T \mat{R}^T \mat{B} \vec{f}^{num}
    + \frac{1}{2} \vec{u}^T \mat{R}^T \mat{B} \mat{R} \mat{u} \vec{u}.
\end{aligned}
\end{equation}
Choosing $\alpha = \frac{2}{3}$, $\frac{\alpha}{2} = 1 - \alpha$ and the 
volume terms cancel out
\begin{equation}
  \frac{1}{2} \od{}{t} \norm{u}_M^2
  = 
      \frac{1}{6} \vec{u}^T \mat{R}^T \mat{B} \mat{R} \mat{u} \vec{u}
    -  \vec{u}^T \mat{R}^T \mat{B} \vec{f}^{num}.
\label{eq:inviscid-burgers-corrected-divergence-result-boundary}
\end{equation}
Thus, Gassner \cite{gassner2013skew} is able to estimate the rate of change in one
element as
\begin{equation}
  \frac{1}{2} \od{}{t} \norm{u}_M^2
  = \frac{1}{6} ( u_R^3 - u_L^3 ) - (u_R f^{num}_R - u_L f^{num}_L),
\label{eq:inviscid-burgers-corrected-divergence-result-boundary-lobatto}
\end{equation}
where the indices $L$ and $R$ denote the values at the left and right boundary
points, respectively. Lobatto-Legendre quadrature is necessary for this
calculation, because the boundary points are nodes for the basis and therefore
the restriction of $u^2$ to the boundary is the square of the restriction of
$u$ to the boundary, i.e. $\mat{R} \vec{u^2} = (\mat{R} \vec{u})^2$. In general,
this is false for other nodes, as for example Gauß-Legendre quadrature.

Continuing the investigation, using periodic boundary conditions and summing
over all elements, the contribution of one boundary can be expressed as
\begin{equation}
  \frac{1}{6} ( u_-^3 - u_+^3 ) - (u_- - u_+) f^{num},
\label{eq:inviscid-burgers-corrected-divergence-result-boundary-terms}
\end{equation}
where the indices $-$ and $+$ indicate the values from the left and right
element, respectively. With the choice of 
\begin{equation}
  f^{num} = \frac{1}{2} \left( \frac{u_+^2}{2} + \frac{u_-^2}{2} \right)
            - \lambda (u_+ - u_-)
\label{eq:burgers-gassner-flux}
\end{equation}
as numerical flux, one can estimate this contribution like \cite{gassner2013skew}
\begin{equation}
\begin{aligned}
  & \frac{1}{6} ( u_-^3 - u_+^3 )
    + \frac{1}{4} (u_+ - u_-) (u_+^2 + u_-^2)
    - \lambda (u_+ - u_-)^2
\\&=\frac{1}{6} ( u_-^3 - u_+^3 )
    + \frac{1}{4} ( u_+^3 - u_+^2 u_- + u_+ u_-^2 - u_-^3 )
    - \lambda (u_+ - u_-)^2
\\&=\frac{1}{12} (u_+^3 - 3 u_+^2 u_- + 3 u_+ u_-^2 - u_-^3)
    - \lambda (u_+ - u_-)^2
\\&=(u_+ - u_-)^2 \left(\frac{u_+ - u_-}{12} - \lambda \right).
\end{aligned}
\label{eq:burgers-gassner-flux-contribution}
\end{equation}
Thus, $\lambda \geq \frac{u_+ - u_-}{12}$ ensures $\od{}{t} \norm{u}_M^2 \leq 0$,
and therefore stability. This proves the following
\begin{lem}[see also \cite{gassner2013skew}]
\label{lem:lobatto-CPR-burgers}
  If the numerical flux $f^{num}$ satisfies
  \begin{equation}
    \frac{1}{6} ( u_-^3 - u_+^3 ) - (u_- - u_+) f^{num}(u_-, u_+) \leq 0,
  \end{equation}
  the CPR method with nodal Lobatto-Legendre basis and
  $\mat{C} = \mat{M}^{-1} \mat{R}^T \mat{B}$ for the skew-symmetric
  inviscid Burgers' equation \eqref{eq:inviscid-burgers-skew-symmetric}
  with correction parameter $\alpha = \frac{2}{3}$, written as
  \begin{equation}
    \partial_t \vec{u}
    + \mat{D} \frac{1}{2} \vec{u^2}
    + \frac{1}{3} \left( \mat{u} \mat{D} \vec{u} - \mat{D} \frac{1}{2} \vec{u^2}\right)
    + \mat{C} \left( \vec{f}^{num} - \mat{R} \frac{1}{2} \vec{u^2} \right)
    = 0,
  \end{equation}
  is stable in the discrete norm $\norm{\cdot}_{M}$ induced by $\mat{M}$.
\end{lem}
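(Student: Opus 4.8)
The plan is to assemble the lemma directly from the identities already established in the surrounding text. First I would specialise the SBP CPR semidiscretisation of the corrected-divergence form \eqref{eq:inviscid-burgers-corrected-divergence} to the nodal Lobatto--Legendre basis with the lumped (diagonal) mass matrix $\mat{M}$ and the canonical correction $\mat{C} = \mat{M}^{-1} \mat{R}^T \mat{B}$, and multiply from the left by $\vec{u}^T \mat{M}$. Applying the SBP property \eqref{eq:SBP-MRBD}, using that $\mat{M}$ and $\mat{u} = \diag{\vec{u}}$ are both diagonal and therefore commute, and then fixing $\alpha = \frac{2}{3}$ so that $\frac{\alpha}{2} = 1 - \alpha$, the two volume contributions $\frac{\alpha}{2}\vec{u}^T\mat{D}^T\mat{M}\mat{u}\vec{u}$ and $(1-\alpha)\vec{u}^T\mat{u}\mat{M}\mat{D}\vec{u}$ cancel, leaving only boundary terms, i.e.\ equation \eqref{eq:inviscid-burgers-corrected-divergence-result-boundary}.

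Next I would convert those boundary terms into pointwise expressions. Since the Lobatto--Legendre nodes include both endpoints $\pm 1$, the restriction commutes with squaring, $\mat{R}\vec{u^2} = (\mat{R}\vec{u})^2$, so that $\frac{1}{6} \vec{u}^T \mat{R}^T \mat{B} \mat{R}\mat{u}\vec{u} = \frac{1}{6}(u_R^3 - u_L^3)$ and $\vec{u}^T \mat{R}^T \mat{B} \vec{f}^{num} = u_R f^{num}_R - u_L f^{num}_L$; this gives the per-element identity \eqref{eq:inviscid-burgers-corrected-divergence-result-boundary-lobatto}. Summing this identity over all elements under periodic boundary conditions, each interface is shared by a left element (boundary value $u_-$, outward sign $+1$) and a right element (boundary value $u_+$, outward sign $-1$) with a common numerical flux $f^{num} = f^{num}(u_-,u_+)$, so the two contributions at one interface collect into exactly \eqref{eq:inviscid-burgers-corrected-divergence-result-boundary-terms}, namely $\frac{1}{6}(u_-^3 - u_+^3) - (u_- - u_+) f^{num}$. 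Invoking the hypothesis on $f^{num}$, each interface term is $\leq 0$, hence summing over all interfaces yields $\od{}{t}\norm{u}_M^2 \leq 0$, which is the claimed stability.

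The only genuine subtlety — and the reason the statement is restricted to a Lobatto--Legendre basis — is the identity $\mat{R}\vec{u^2} = (\mat{R}\vec{u})^2$: for a general SBP basis whose quadrature nodes do not include the endpoints (for example Gauß--Legendre), the restriction of the interpolant of $u^2$ is not the square of the restriction of $u$, the boundary terms no longer telescope into a clean cubic, and the argument collapses. The rest is bookkeeping already carried out above, together with the elementary identity $(u_+-u_-)^3 = u_+^3 - 3u_+^2 u_- + 3 u_+ u_-^2 - u_-^3$ employed in \eqref{eq:burgers-gassner-flux-contribution}, which shows that the Rusanov-type flux \eqref{eq:burgers-gassner-flux} with $\lambda \geq \frac{u_+-u_-}{12}$ does satisfy the hypothesis, so that the lemma is non-vacuous.
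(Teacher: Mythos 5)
Your proposal is correct and follows essentially the same route as the paper: multiplying the split form by $\vec{u}^T \mat{M}$, using the diagonality of $\mat{M}$ and $\mat{u}$ together with the SBP property to cancel the volume terms at $\alpha = \tfrac{2}{3}$, exploiting $\mat{R}\vec{u^2} = (\mat{R}\vec{u})^2$ for Lobatto nodes, and telescoping the interface contributions under the flux hypothesis. The closing verification that the Rusanov-type flux with $\lambda \geq \tfrac{u_+ - u_-}{12}$ satisfies the hypothesis is a welcome (if strictly unnecessary) addition.
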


As remarked above, this stability result is based on Lobatto-Legendre nodes
including the boundaries. To get stability for a general SBP basis, further
corrections are necessary. To the authors' knowledge, this is a new idea and
not published anywhere else. Recalling equation
\eqref{eq:inviscid-burgers-corrected-divergence-result-boundary}, the contribution
of one boundary is
\begin{equation}
  \frac{1}{2} \od{}{t} \norm{u}_M^2
  = 
    \frac{1}{6} ( u_- (u^2)_- - u_+ (u^2)_+ )
    + (u_+ - u_-) f^{num}.
\end{equation}
In general, multiplication and restriction to the boundary do not commute,
i.e. $(u_R)^2 \neq (u^2)_R$, as mentioned above. In comparison with the
estimate \eqref{eq:inviscid-burgers-corrected-divergence-result-boundary-lobatto}
of \cite{gassner2013skew},
\begin{equation}
  \frac{1}{6} ( u_- (u^2)_- - u_+ (u^2)_+ )
  - \frac{1}{6} ( u_-^3 - u_+^3 )
\end{equation}
appears as additional term on the right hand side, possibly leading to
instability. Therefore, an SBP CPR method with corrected divergence
(skew-symmetric form) and corrected boundary terms is proposed
\begin{equation}
  \partial_t \vec{u}
  + \frac{\alpha}{2} \mat{D} \vec{u^2}
  + (1 - \alpha) \mat{u} \mat{D} \vec{u}
  + \mat{C} \left( \vec{f}^{num}
                  - \frac{\beta}{2} \mat{R} \vec{u^2}
                  - \frac{1 - \beta}{2} (\mat{R} \vec{u})^2 
            \right)
  = 0,
\label{eq:inviscid-burgers-CPR-corrected-divergence-restriction}
\end{equation}
$0 \leq \alpha, \beta \leq 1$. Setting $\alpha = \frac{2}{3}$ and repeating the
calculations as above results in
\begin{equation}
  \frac{1}{2} \od{}{t} \norm{u}_M^2
  = - \vec{u}^T \mat{R} \mat{B} \vec{f}^{num}
    - \left(\frac{1}{3} - \frac{\beta}{2}\right) \vec{u}^T \mat{R}^T \mat{B} \mat{R} \vec{u^2}
    + \frac{1 - \beta}{2} \vec{u}^T \mat{R}^T \mat{B} (\mat{R} \vec{u})^2.
\end{equation}
Therefore, setting $\beta = \frac{2}{3}$ results in 
\begin{equation}
  \frac{1}{2} \od{}{t} \norm{u}_M^2
  = - \vec{u}^T \mat{R} \mat{B} \vec{f}^{num}
    + \frac{1}{6} \vec{u}^T \mat{R}^T \mat{B} (\mat{R} \vec{u})^2,
\end{equation}
and the contribution of one boundary is the same as for Lobatto-Legendre nodes in
\eqref{eq:inviscid-burgers-corrected-divergence-result-boundary-terms}.
This proves the following
\begin{lem}
\label{lem:CPR-burgers-stable}
  If the numerical flux $f^{num}$ satisfies
  \begin{equation}
    \frac{1}{6} ( u_-^3 - u_+^3 ) - (u_- - u_+) f^{num}(u_-, u_+) \leq 0,
  \end{equation}
  the SBP CPR method with $\mat{C} = \mat{M}^{-1} \mat{R}^T \mat{B}$
  for the inviscid Burgers' equation \eqref{eq:inviscid-burgers}
  with correction terms for the divergence and restriction to the boundary,
  written as
  \begin{equation}
  \begin{aligned}
    &\partial_t \vec{u}
    + \mat{D} \frac{1}{2} \vec{u^2}
    + \frac{1}{3} \left( \mat{u} \mat{D} \vec{u} - \mat{D} \frac{1}{2} \vec{u^2}\right)
  \\&+ \mat{C} \left( \vec{f}^{num}
                    - \mat{R} \frac{1}{2} \vec{u^2}
                    - \frac{1}{3} \left( \frac{1}{2} (\mat{R} \vec{u})^2
                                        - \frac{1}{2} \mat{R} \vec{u^2}
                                  \right)
              \right)
    = 0,
  \end{aligned}
  \label{eq:inviscid-burgers-CPR-full-correction}
  \end{equation}
  is stable in the discrete norm $\norm{\cdot}_{M}$ induced by $\mat{M}$.
\end{lem}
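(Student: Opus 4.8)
The plan is to run, for the fully parametrised scheme \eqref{eq:inviscid-burgers-CPR-corrected-divergence-restriction} with free parameters $\alpha,\beta\in[0,1]$, exactly the chain of manipulations that Gassner used for the Lobatto--Legendre case, and then to select $\alpha$ and $\beta$ so that first all volume contributions and then the boundary contribution involving the non-commuting combination $\mat{R}\vec{u^2}$ cancel. Recall that the conceptual point of the extra $\beta$-correction is precisely that $\mat{R}\vec{u^2}\neq(\mat{R}\vec{u})^2$ for general (non-Lobatto) nodes, so the naive argument of Lemma \ref{lem:lobatto-CPR-burgers} leaves over an uncontrolled term; the role of $\beta=\tfrac23$ is to annihilate that term.

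Concretely, first I would test the discrete $\mat{M}$-norm by multiplying \eqref{eq:inviscid-burgers-CPR-corrected-divergence-restriction} from the left by $\vec{u}^T\mat{M}$, so that $\vec{u}^T\mat{M}\partial_t\vec{u}=\tfrac12\od{}{t}\norm{u}_M^2$ by symmetry of $\mat{M}$. Using $\mat{M}\mat{C}=\mat{R}^T\mat{B}$ converts the correction term into a pure boundary expression, and writing $\vec{u^2}=\mat{u}\vec{u}$ with $\mat{u}=\diag{\vec{u}}$ diagonal (hence commuting with the diagonal $\mat{M}$) allows the SBP property $\mat{M}\mat{D}+\mat{D}^T\mat{M}=\mat{R}^T\mat{B}\mat{R}$ to be applied to $\vec{u}^T\mat{M}\mat{D}\mat{u}\vec{u}$. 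This is the computation already displayed in the text preceding the lemma: for $\alpha=\tfrac23$ the coefficient $\tfrac{\alpha}{2}$ of the term $\vec{u}^T\mat{D}^T\mat{M}\mat{u}\vec{u}$ equals $1-\alpha$, the two volume terms cancel, and one is left with
\begin{equation*}
  \tfrac12\od{}{t}\norm{u}_M^2
  = -\vec{u}^T\mat{R}^T\mat{B}\vec{f}^{num}
    -\Bigl(\tfrac13-\tfrac{\beta}{2}\Bigr)\vec{u}^T\mat{R}^T\mat{B}\mat{R}\vec{u^2}
    +\tfrac{1-\beta}{2}\,\vec{u}^T\mat{R}^T\mat{B}(\mat{R}\vec{u})^2 .
\end{equation*}
Then choosing $\beta=\tfrac23$ makes $\tfrac13-\tfrac{\beta}{2}=0$, so the $\mat{R}\vec{u^2}$ contribution disappears and only $\tfrac12\od{}{t}\norm{u}_M^2=-\vec{u}^T\mat{R}^T\mat{B}\vec{f}^{num}+\tfrac16\vec{u}^T\mat{R}^T\mat{B}(\mat{R}\vec{u})^2$ remains.

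This surviving right-hand side depends only on the interpolated boundary values $\mat{R}\vec{u}=(u_L,u_R)^T$, so it equals $\tfrac16(u_R^3-u_L^3)-(u_Rf^{num}_R-u_Lf^{num}_L)$ per element — literally the same boundary expression as in the Lobatto case \eqref{eq:inviscid-burgers-corrected-divergence-result-boundary-lobatto}. Summing over all elements with periodic boundary conditions, the contribution of each interface collapses, as in \eqref{eq:inviscid-burgers-corrected-divergence-result-boundary-terms}, to $\tfrac16(u_-^3-u_+^3)-(u_--u_+)f^{num}(u_-,u_+)$, which is $\le0$ by hypothesis; hence $\od{}{t}\norm{u}_M^2\le0$, i.e.\ stability in $\norm{\cdot}_M$. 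The only real work is bookkeeping: one must check that \eqref{eq:inviscid-burgers-CPR-corrected-divergence-restriction} with $\alpha=\beta=\tfrac23$ is indeed the scheme \eqref{eq:inviscid-burgers-CPR-full-correction} stated in the lemma (the short rearrangements $\tfrac12-\tfrac16=\tfrac13$ for the divergence split and $-\tfrac12+\tfrac16=-\tfrac13$, together with the leftover $-\tfrac16(\mat{R}\vec{u})^2$, for the restriction terms), and that no term containing $\mat{R}\vec{u^2}$ survives the choice $\beta=\tfrac23$. There is no analytic obstacle beyond this algebra: once the problematic terms cancel, the estimate is identical to the one already established for Lobatto--Legendre nodes.
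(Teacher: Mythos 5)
Your proposal is correct and follows essentially the same route as the paper: parametrise the scheme with $\alpha,\beta$, multiply by $\vec{u}^T\mat{M}$, use $\mat{M}\mat{C}=\mat{R}^T\mat{B}$, diagonality of $\mat{M}$ and $\mat{u}$, and the SBP property to cancel the volume terms at $\alpha=\tfrac23$, then choose $\beta=\tfrac23$ to eliminate the $\mat{R}\vec{u^2}$ boundary term and reduce to the Lobatto-case interface estimate. The bookkeeping identifying \eqref{eq:inviscid-burgers-CPR-corrected-divergence-restriction} at $\alpha=\beta=\tfrac23$ with \eqref{eq:inviscid-burgers-CPR-full-correction} checks out, so nothing is missing.
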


The motivation to introduce the skew-symmetric form (the divergence correction)
as described in \cite{gassner2013skew} (see also inter alia
\cite{fisher2013discretely, svard2014review, fernandez2014review})
was the invalid product rule for the discrete derivative operator $\mat{D}$.
In view of the previous lemma, the inexactness of \emph{discrete multiplication}
is stressed, resulting in both an invalid product rule for polynomial bases
and incorrect restriction to the boundary for nodal bases not including
boundary points.

\subsection{Conservation}

In order to be useful, the semidiscretisation
\eqref{eq:inviscid-burgers-CPR-full-correction} also has to be conservative.
As in Lemma \ref{lem:CPR-conservation}, multiplication with the constant function,
represented as $\vec{1}$, yields
\begin{equation}
  \vec{1}^T \mat{M} \partial_t \vec{u}
  =
  - \vec{1}^T \mat{M} \mat{D} \vec{f}
  - \vec{1}^T \mat{M} \vec{c}_{div}
  - \vec{1}^T \mat{M} \mat{C} ( \vec{f}^{num} - \mat{R} \vec{f} - \vec{c}_{res}).
\end{equation}
Here and in the following, $\vec{c}_{div}$ and $\vec{c}_{res}$ denote correction
terms for the divergence and restriction, respectively.
Using $\vec{1}^T \mat{M} \mat{C} = \vec{1}^T \mat{R}^T \mat{B}$ and the
SBP property results in
\begin{equation}
  \od{}{t} \vec{1}^T \mat{M} \vec{u}
  =
    \vec{1}^T \mat{D}^T \mat{M} \vec{f}
  - \vec{1}^T \mat{R}^T \mat{B} \mat{R} \vec{f}
  - \vec{1}^T \mat{M} \vec{c}_{div}
  - \vec{1}^T \mat{R}^T \mat{B} ( \vec{f}^{num} - \mat{R} \vec{f} - \vec{c}_{res}).
\end{equation}
Since the discrete derivative is exact for constant functions,
$\mat{D} \vec{1} = 0$, and the rate of change can be expressed as
\begin{equation}
  \od{}{t} \vec{1}^T \mat{M} \vec{u}
  =
  - \vec{1}^T \mat{M} \vec{c}_{div}
  - \vec{1}^T \mat{R}^T \mat{B} \vec{f}^{num} 
  + \vec{1}^T \mat{R}^T \mat{B} \vec{c}_{res}.
\end{equation}
Inserting the correction terms
\begin{equation}
  \vec{c}_{div}
  = \frac{1}{3} \left( \mat{u} \mat{D} \vec{u}
                      - \frac{1}{2} \mat{D} \mat{u} \vec{u}
                \right)
  ,\qquad
  \vec{c}_{res}
  = \frac{1}{6} \left( (\mat{R} \vec{u})^2 - \mat{R} \mat{u} \vec{u} \right),
\end{equation}
$\od{}{t} \vec{1}^T \mat{M} \vec{u}$ is rewritten as
\begin{equation}
\begin{aligned}
  \od{}{t} \vec{1}^T \mat{M} \vec{u} 
  =
  & - \vec{1}^T \mat{R}^T \mat{B} \vec{f}^{num}
    - \frac{1}{3} \, \vec{1}^T \mat{M} \mat{u} \mat{D} \vec{u}
    + \frac{1}{6} \, \vec{1}^T \mat{M} \mat{D} \mat{u} \vec{u}
\\& + \frac{1}{6} \, \vec{1}^T \mat{R}^T \mat{B} (\mat{R} \vec{u})^2
    - \frac{1}{6} \, \vec{1}^T \mat{R}^T \mat{B} \mat{R} \mat{u} \vec{u}.
\end{aligned}
\end{equation}
For diagonal-norm SBP operators, both $\mat{M}$ and $\mat{u}$ are diagonal
and therefore commute. Using $\mat{u} \vec{1} = \vec{u}$ results in
\begin{equation}
\begin{aligned}
  \od{}{t} \vec{1}^T \mat{M} \vec{u} 
  = 
  & - \vec{1}^T \mat{R}^T \mat{B} \vec{f}^{num}
    - \frac{1}{3} \, \vec{u}^T \mat{M} \mat{D} \vec{u}
    + \frac{1}{6} \, \vec{1}^T \mat{M} \mat{D} \mat{u} \vec{u}
\\& + \frac{1}{6} \, \vec{1}^T \mat{R}^T \mat{B} (\mat{R} \vec{u})^2
    - \frac{1}{6} \, \vec{1}^T \mat{R}^T \mat{B} \mat{R} \mat{u} \vec{u}.
\end{aligned}
\end{equation}
The SBP property yields
\begin{equation}
\begin{aligned}
  \od{}{t} \vec{1}^T \mat{M} \vec{u} = 
  & - \vec{1}^T \mat{R}^T \mat{B} \vec{f}^{num}
    + \frac{1}{3} \, \vec{u}^T \mat{D}^T \mat{M} \vec{u}
    - \frac{1}{3} \, \vec{u}^T \mat{R}^T \mat{B} \mat{R} \vec{u}
\\& + \frac{1}{6} \, \vec{1}^T \mat{M} \mat{D} \mat{u} \vec{u}
    + \frac{1}{6} \, \vec{1}^T \mat{R}^T \mat{B} (\mat{R} \vec{u})^2
    - \frac{1}{6} \, \vec{1}^T \mat{R}^T \mat{B} \mat{R} \mat{u} \vec{u}.
\end{aligned}
\end{equation}
Adding the last two equations and multiplying by one half results in
\begin{equation}
\begin{aligned}
  \od{}{t} \vec{1}^T \mat{M} \vec{u} = 
  & - \vec{1}^T \mat{R}^T \mat{B} \vec{f}^{num}
    - \frac{1}{6} \, \vec{u}^T \mat{R}^T \mat{B} \mat{R} \vec{u}
    + \frac{1}{6} \, \vec{1}^T \mat{M} \mat{D} \mat{u} \vec{u}
\\& + \frac{1}{6} \, \vec{1}^T \mat{R}^T \mat{B} (\mat{R} \vec{u})^2
    - \frac{1}{6} \, \vec{1}^T \mat{R}^T \mat{B} \mat{R} \mat{u} \vec{u}.
\end{aligned}
\end{equation}
Again, by using the SBP property
\begin{equation}
\begin{aligned}
  \od{}{t} \vec{1}^T \mat{M} \vec{u} = 
  & - \vec{1}^T \mat{R}^T \mat{B} \vec{f}^{num}
    - \frac{1}{6} \, \vec{u}^T \mat{R}^T \mat{B} \mat{R} \vec{u}
    - \frac{1}{6} \, \vec{1}^T \mat{D}^T \mat{M} \mat{u} \vec{u}
\\& + \frac{1}{6} \, \vec{1}^T \mat{R}^T \mat{B} \mat{R} \mat{u} \vec{u}
    + \frac{1}{6} \, \vec{1}^T \mat{R}^T \mat{B} (\mat{R} \vec{u})^2
    - \frac{1}{6} \, \vec{1}^T \mat{R}^T \mat{B} \mat{R} \mat{u} \vec{u}.
\end{aligned}
\end{equation}
Gathering terms and using $\mat{D} \vec{1} = 0$, this can be rewritten as
\begin{equation}
\od{}{t} \vec{1}^T \mat{M} \vec{u} = 
  - \vec{1}^T \mat{R}^T \mat{B} \vec{f}^{num}
  - \frac{1}{6} \, \vec{u}^T \mat{R}^T \mat{B} \mat{R} \vec{u}
  + \frac{1}{6} \, \vec{1}^T \mat{R}^T \mat{B} (\mat{R} \vec{u})^2.
\end{equation}
Finally, since
\begin{equation}
  \vec{u}^T \mat{R}^T \mat{B} \mat{R} \vec{u}
  =
  u_R \cdot u_R - u_L \cdot u_L
  = 
  1 \cdot u_R^2 - 1 \cdot u_L^2
  =
  \vec{1}^T \mat{R}^T \mat{B} (\mat{R} \vec{u})^2,
\end{equation}
this reduces to the same equation as in the proof of Lemma
\ref{lem:CPR-conservation}. Thus, the following Lemma is proved
\begin{lem}
\label{lem:CPR-burgers-conservative}
  If $\vec{1}^T \mat{M} \mat{C} = \vec{1}^T \mat{R}^T \mat{B}$,
  the SBP CPR method \eqref{eq:inviscid-burgers-CPR-full-correction}
  for the inviscid Burgers' equation \eqref{eq:inviscid-burgers} is conservative.
\end{lem}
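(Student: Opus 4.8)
The plan is to imitate the conservation derivation of Lemma~\ref{lem:CPR-conservation}, now carrying along the two additional correction terms
$\vec{c}_{div} = \frac{1}{3}\bigl(\mat{u}\mat{D}\vec{u} - \frac{1}{2}\mat{D}\mat{u}\vec{u}\bigr)$
and
$\vec{c}_{res} = \frac{1}{6}\bigl((\mat{R}\vec{u})^2 - \mat{R}\mat{u}\vec{u}\bigr)$
present in \eqref{eq:inviscid-burgers-CPR-full-correction}. First I would left-multiply the semidiscretisation by $\vec{1}^T\mat{M}$, so that the left-hand side becomes $\od{}{t}\vec{1}^T\mat{M}\vec{u}$, the discrete mass on the standard element. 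On the right-hand side, the hypothesis $\vec{1}^T\mat{M}\mat{C} = \vec{1}^T\mat{R}^T\mat{B}$ converts the correction-matrix term into a boundary term, and the SBP property \eqref{eq:SBP-MRBD} together with $\mat{D}\vec{1}=0$ removes the leading divergence term $\frac{1}{2}\vec{1}^T\mat{M}\mat{D}\vec{u^2}$ up to a boundary contribution, exactly as in Lemma~\ref{lem:CPR-conservation}.

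The substantive part is to show that the leftover volume pieces coming from $\vec{c}_{div}$ collapse to boundary terms as well. Here I would use that, for a diagonal-norm SBP operator, $\mat{M}$ and $\mat{u}=\diag{\vec{u}}$ are both diagonal and hence commute, together with $\mat{u}\vec{1}=\vec{u}$, to rewrite e.g.\ $\vec{1}^T\mat{M}\mat{u}\mat{D}\vec{u} = \vec{u}^T\mat{M}\mat{D}\vec{u}$ and symmetrise it by one application of \eqref{eq:SBP-MRBD}, and to move the $\mat{D}$ in $\vec{1}^T\mat{M}\mat{D}\mat{u}\vec{u}$ onto $\vec{1}$ (where it vanishes) after a further application of \eqref{eq:SBP-MRBD}. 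Collecting all boundary contributions produced in this way yields
\begin{equation*}
  \od{}{t}\vec{1}^T\mat{M}\vec{u}
  = -\vec{1}^T\mat{R}^T\mat{B}\vec{f}^{num}
    - \frac{1}{6}\,\vec{u}^T\mat{R}^T\mat{B}\mat{R}\vec{u}
    + \frac{1}{6}\,\vec{1}^T\mat{R}^T\mat{B}(\mat{R}\vec{u})^2 .
\end{equation*}

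To finish, I would invoke the endpoint identity $\vec{u}^T\mat{R}^T\mat{B}\mat{R}\vec{u} = u_R^2 - u_L^2 = \vec{1}^T\mat{R}^T\mat{B}(\mat{R}\vec{u})^2$, valid because the one-dimensional restriction is exact evaluation at the two endpoints; the two $\frac{1}{6}$-terms then cancel and the rate of change reduces to $-\vec{1}^T\mat{R}^T\mat{B}\vec{f}^{num}$, precisely the expression reached at the corresponding point in the proof of Lemma~\ref{lem:CPR-conservation}. Summing over all elements, using that the numerical flux at an interior interface is shared by the two neighbouring elements (up to a sign), then gives $\od{}{t}\int_a^b u\dif x = f_R - f_L$ for the scheme, hence global conservation under periodic boundary conditions. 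I expect the main obstacle to be the bookkeeping in the middle step: one must apply the SBP property two or three times and use the commutativity of the diagonal matrices $\mat{M}$ and $\mat{u}$ in exactly the right order so that every volume term produced by $\vec{c}_{div}$ either cancels against another or turns into a boundary term. The conceptual point that makes the argument close is that, although discrete multiplication and restriction do not commute inside an element, they do on the zero-dimensional boundary, so the restriction correction $\vec{c}_{res}$ exactly compensates the resulting mismatch, mirroring the mechanism used in the stability proof of Lemma~\ref{lem:CPR-burgers-stable}.
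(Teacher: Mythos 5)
Your proposal is correct and follows essentially the same route as the paper's proof: multiply by $\vec{1}^T \mat{M}$, use the hypothesis on $\mat{C}$ together with the SBP property and $\mat{D}\vec{1}=0$, exploit the commutativity of the diagonal matrices $\mat{M}$ and $\mat{u}$ with $\mat{u}\vec{1}=\vec{u}$ to symmetrise and eliminate the volume terms from $\vec{c}_{div}$, and cancel the two remaining $\frac{1}{6}$ boundary terms via the endpoint identity $\vec{u}^T\mat{R}^T\mat{B}\mat{R}\vec{u} = \vec{1}^T\mat{R}^T\mat{B}(\mat{R}\vec{u})^2$. The intermediate expression you reach is exactly the one appearing in the paper, so no gap remains.
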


As Lemma \ref{lem:CPR-conservation}, Lemma
\ref{lem:CPR-burgers-conservative} proofs conservation across elements.
On a sub-element level, conservation for diagonal-norm SBP operators
(including boundary nodes) and conservation laws in split form has been proven
in \cite{fisher2013discretely} in the context of the Lax-Wendroff theorem.

\subsection{Numerical fluxes}

In the following, some numerical fluxes for Burgers' equation are investigated.
Gassner \cite{gassner2013skew} considered fluxes of the form
\eqref{eq:burgers-gassner-flux}.
Choosing $\lambda = (u_+ - u_-) / 12$ leads to the \emph{energy conservative}
(ECON) flux of \cite{gassner2013skew}
\begin{equation}
  f^{num}(u_-, u_+)
  =
  \frac{1}{4} ( u_+^2 + u_-^2 )
  - \frac{(u_+ - u_-)^2}{12}
\label{eq:burgers-econ-flux}
\end{equation}
With this choice, the contribution of
the boundary terms vanishes and therefore the energy $\norm{u}^2$ remains constant.
Since the energy is also an entropy, this will result in unphysical solutions
after the formation of shocks.

The choice $\lambda = |u_+ + u_-| / 2$ results in \emph{Roe's} flux
\begin{equation}
  f^{num}(u_-, u_+)
  =
  \frac{1}{4} ( u_+^2 + u_-^2 )
  - \frac{|u_+ - u_-|}{2} (u_+ - u_-)
\label{eq:burgers-roe-flux}
\end{equation}
Unfortunately, the contribution \eqref{eq:burgers-gassner-flux-contribution}
is not guaranteed to be non-negative, since $|u_+ + u_-| \geq (u_+ - u_-) / 6$
is possible, e.g. for $u_+ = -u_- > 0$. Therefore, this choice does not imply
stability.

Finally, Gassner \cite{gassner2013skew} considered the \emph{local Lax-Friedrichs}
(LLF) flux with parameter $\lambda = \frac{\max( |u_+|, |u_-| )}{2}$
\begin{equation}
  f^{num}(u_-, u_+)
  =
  \frac{1}{4} ( u_+^2 + u_-^2 )
  - \frac{\max( |u_+|, |u_-| )}{2} (u_+ - u_-)
\label{eq:burgers-llf-flux}
\end{equation}
leading to entropy stability, since
$\max( |u_+|, |u_-| ) \geq |u_+| + |u_-| \geq (u_+ - u_-) / 6$.

Another possible numerical flux is \emph{Osher's} flux (see
\cite[section 12.1.4]{toro2009riemann})
\begin{equation}
  f^{num}(u_-, u_+)
  =
  \begin{cases}
    \frac{u_-^2}{2}, & u_+, u_- > 0, \\
    \frac{u_+^2}{2}, & u_+, u_- < 0, \\
    \frac{u_+^2}{2} + \frac{u_-^2}{2}, & u_- \geq 0 \geq u_+, \\
    0, & u_- \leq 0 \leq u_+.
  \end{cases}
\label{eq:burgers-osher-flux}
\end{equation}
Inserting this flux in the condition of Lemma \ref{lem:CPR-burgers-stable} for
the case $u_+, u_- > 0$ leads to
\begin{equation}
\begin{aligned}
  &\frac{1}{6} ( u_-^3 - u_+^3 ) - (u_- - u_+) \frac{u_-^2}{2}
  =
    - \frac{1}{3} u_-^3-\frac{1}{6} u_+^3  + \frac{1}{2} u_+ u_-^2
\\&\leq
    - \frac{1}{3} u_-^3-\frac{1}{6} u_+^3 
    + \frac{1}{2} \left( \frac{1}{3} u_+^3 + \frac{2}{3} u_-^3 \right)
  =
  0,
\end{aligned}
\end{equation}
where Young's inequality 
\begin{equation}
  a b \leq \frac{a^p}{p} + \frac{b^q}{q},
  \quad a, b > 0,\quad \frac{1}{p} + \frac{1}{q} = 1
\label{eq:young-inequality}
\end{equation}
was used. The case $u_+, u_- < 0$ is similar. If $u_- \geq 0 \geq u_+$,
the condition of Lemma \ref{lem:CPR-burgers-stable} reads
\begin{equation}
  \frac{1}{6} ( u_-^3 - u_+^3 ) - \frac{1}{2} (u_- - u_+) (u_+^2 + u_-^2)
  =
    - \frac{1}{3} u_-^3 + \frac{1}{3} u_+^3
    - \frac{1}{2} u_+^2 u_- + \frac{1}{2} u_+ u_-^2
  \leq 0,
\end{equation}
since each term is not positive. Finally, for $u_- \leq 0 \leq u_+$, the
contribution $( u_-^3 - u_+^3 ) / 6$ is again not positive. Thus, this
numerical flux results in a stable scheme.

\subsection{Summary and numerical results}

The results are summed up in the following
\begin{thm}
\label{thm:CPR-burgers}
  If the numerical flux $f^{num}$ satisfies
  \begin{equation}
    \frac{1}{6} ( u_-^3 - u_+^3 ) - (u_- - u_+) f^{num}(u_-, u_+) \leq 0,
  \end{equation}
  then an SBP CPR method with $\mat{C} = \mat{M}^{-1} \mat{R}^T \mat{B}$ and
  correction terms for both divergence and restriction to the boundary
  \begin{equation}
  \tag{\eqref{eq:inviscid-burgers-CPR-full-correction}}
  \begin{aligned}
    & \partial_t \vec{u}
      + \mat{D} \frac{1}{2} \vec{u^2}
      + \frac{1}{3} \left( \mat{u} \mat{D} \vec{u} - \mat{D} \frac{1}{2} \vec{u^2}\right)
  \\& + \mat{C} \left( \vec{f}^{num}
                    - \mat{R} \frac{1}{2} \vec{u^2}
                    - \frac{1}{3} \left( \frac{1}{2} (\mat{R} \vec{u})^2
                                        - \frac{1}{2} \mat{R} \vec{u^2}
                                  \right)
              \right)
    = 0,
  \end{aligned}
  \end{equation}
  for the inviscid Burgers' equation \eqref{eq:inviscid-burgers} 
  is both conservative and stable in the discrete norm $\norm{\cdot}_{M}$
  induced by $\mat{M}$.
  Numerical fluxes fulfilling this condition are inter alia
  \begin{itemize}
    \item the energy conservative (ECON) flux \eqref{eq:burgers-econ-flux},
    \item the local Lax-Friedrichs (LLF) flux \eqref{eq:burgers-llf-flux},
    \item and Osher's flux \eqref{eq:burgers-osher-flux}.
  \end{itemize}
\end{thm}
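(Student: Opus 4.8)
The statement is a synthesis of the two lemmas just proved together with a verification that the three listed fluxes meet the structural hypothesis, so the plan is to assemble rather than to recompute. First I would observe that the semidiscretisation displayed in the theorem is literally \eqref{eq:inviscid-burgers-CPR-full-correction}, so \textbf{stability} is nothing but Lemma \ref{lem:CPR-burgers-stable}: under the stated sign condition on $f^{num}$ one has $\od{}{t} \norm{u}_M^2 \leq 0$ after summing the per-boundary contributions over all elements with periodic boundary data. For \textbf{conservation} I would invoke Lemma \ref{lem:CPR-burgers-conservative}, whose only hypothesis is $\vec{1}^T \mat{M} \mat{C} = \vec{1}^T \mat{R}^T \mat{B}$; this holds for the chosen $\mat{C} = \mat{M}^{-1} \mat{R}^T \mat{B}$ because $\mat{M} \mat{M}^{-1} = \mat{I}$. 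Hence the semidiscretisation is simultaneously conservative across elements and energy non-increasing.

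It then remains to check that the ECON flux \eqref{eq:burgers-econ-flux}, the LLF flux \eqref{eq:burgers-llf-flux} and Osher's flux \eqref{eq:burgers-osher-flux} all satisfy $\frac{1}{6}(u_-^3 - u_+^3) - (u_- - u_+) f^{num}(u_-, u_+) \leq 0$. For ECON and LLF this follows at once from the identity \eqref{eq:burgers-gassner-flux-contribution}: both are of the form \eqref{eq:burgers-gassner-flux}, so the left-hand side equals $(u_+ - u_-)^2 \bigl( \tfrac{u_+ - u_-}{12} - \lambda \bigr)$, which vanishes for the ECON choice $\lambda = (u_+ - u_-)/12$ and is nonpositive for the LLF choice $\lambda = \tfrac12 \max(|u_+|,|u_-|)$, since $u_+ - u_- \leq |u_+| + |u_-| \leq 2\max(|u_+|,|u_-|) = 4\lambda$ gives $\tfrac{u_+-u_-}{12} \leq \tfrac{\lambda}{3} \leq \lambda$. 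For Osher's flux I would split into the four sign regimes of $(u_-, u_+)$ according to \eqref{eq:burgers-osher-flux}: in the two unidirectional cases the inequality reduces, after the cancellation worked out above, to controlling a cross term such as $\tfrac12 u_+ u_-^2$ by Young's inequality \eqref{eq:young-inequality} with the conjugate pair $(3, \tfrac32)$, giving $\tfrac12 u_+ u_-^2 \leq \tfrac16 u_+^3 + \tfrac13 u_-^3$; in the transonic-shock case $u_- \geq 0 \geq u_+$ every monomial appearing in the expansion already carries the favourable sign; and in the transonic-rarefaction case $u_- \leq 0 \leq u_+$ only $\tfrac16(u_-^3 - u_+^3)$ survives and is nonpositive.

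Since every ingredient has been established in the preceding subsections, there is no genuine obstacle here; the only place demanding a little care is the Osher case, where one must choose the Young-inequality exponents so that the relevant cross monomial is dominated by exactly the coefficients of $u_\pm^3$ produced by the cancellation, and keep the sign bookkeeping straight across the four branches of \eqref{eq:burgers-osher-flux}.
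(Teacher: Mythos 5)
Your proposal is correct and follows essentially the same route as the paper: the theorem is assembled from Lemma \ref{lem:CPR-burgers-stable} (stability), Lemma \ref{lem:CPR-burgers-conservative} (conservation, with the trivial check $\vec{1}^T \mat{M} \mat{M}^{-1} \mat{R}^T \mat{B} = \vec{1}^T \mat{R}^T \mat{B}$), and the case-by-case flux verifications of the preceding subsection, including the identity \eqref{eq:burgers-gassner-flux-contribution} for ECON/LLF and the Young-inequality argument with exponents $(3,\tfrac{3}{2})$ for Osher's flux. Your chain $u_+ - u_- \leq |u_+| + |u_-| \leq 2\max(|u_+|,|u_-|)$ for the LLF bound is in fact a cleaner rendering of the paper's (slightly garbled) inequality and reaches the same conclusion.
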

Of course, the ECON flux should not be used in situations involving
discontinuities, as shown in the following numerical examples.
The setting is the same as in the case considered in \cite{gassner2013skew},
i.e. the inviscid Burgers' equation \eqref{eq:inviscid-burgers} in the domain
$[0, 2]$ with periodic boundary conditions is solved. The initial condition is
\begin{equation}
  u(0, x) = u_0(x) = \sin(\pi x) + 0.01.
\end{equation}
Several SBP CPR methods with $N = 20$ equally spaced elements of order $p = 7$
and correction terms for the divergence (and restriction, if mentioned) are
used as semidiscretisation. The classical Runge-Kutta method of fourth order
with $10,000$ equal time steps is used to obtain the discrete solution in the
time interval $[0, 3]$.

Results for the SBP CPR method with Lobatto-Legendre basis points and associated
quadrature as discrete norm are shown in Figure \ref{fig:gassner2013skew-lobatto}.
Since the correction for restriction to the boundary is zero, only a correction
term for the divergence is used.
On the left-hand side, the solution $u(3) = u(3, \cdot)$ obtained with an
energy conservative \eqref{eq:burgers-econ-flux}, local Lax-Friedrichs
\eqref{eq:burgers-llf-flux} and Osher's \eqref{eq:burgers-osher-flux} numerical
flux is plotted. On the right-hand side, the evolution of associated discrete
momentum $\vec{1}^T \mat{M} \vec{u}$ and energy $\vec{u}^T \mat{M} \vec{u}$ in
the time interval $[0, 3]$ is visualized.

The ECON flux yields a conservation
of discrete momentum and energy relative to the initial values of order
$10^{-5}$, as expected. Using a more accurate time integrator would result in
better preservation of these values. Due to the discontinuity around $x = 1$,
the results obtained by the ECON flux are not physically relevant and highly
oscillatory.

Both the local Lax-Friedrichs and Osher's flux yield good results. After the
development of the shock before $t = 0.5$, discrete momentum and energy are
constant. Afterwards, momentum is conserved but energy is dissipated, as it is
an entropy for Burgers' equation. Around the shock, oscillations develop but
remain bounded and the total scheme is stable.

The results in Figure \ref{fig:gassner2013skew-gauss-with-correction} are
qualitatively similar to those mentioned above. There, a Gauß-Legendre basis
is used in an SBP CPR method with correction terms for both divergence and
restriction to the boundary. The plots look very similar to those of Figure
\ref{fig:gassner2013skew-lobatto}, but higher accuracy of Gauß-Legendre 
integration yields slightly less oscillatory solutions for the local
Lax-Friedrichs and Osher's flux and a smoother decay of entropy. As before,
ECON flux does not yield a physically relevant solution.

In contrast, Figure \ref{fig:gassner2013skew-gauss-without-correction} shows
results for a Gauß-Legendre basis without the correction term for restriction.
In accordance with the theoretical investigations, conservation and stability
cannot be guaranteed. A blow-up of energy for the ECON flux occurs around
$t = 0.43$. The other solutions are not physically relevant as well, since
momentum is lost. Therefore, the additional correction term is
necessary.

The results for Roe's flux are shown in Figure \ref{fig:gassner2013skew-roe}.
Stability cannot be guaranteed by using this flux and accordingly the solution
obtained by a Lobatto-Legendre basis blows up around $t = 2.5$. The computations
using Gauß-Legendre basis remain stable and energy is dissipated, but they do
not seem to be as acceptable as those obtained using Osher's or the local
Lax-Friedrichs flux. Without the additional correction term for restriction
to the boundary, momentum conservation is lost and severe oscillations occur.

Finally, results for high order methods using polynomials of degree $p = 25$
and $p = 50$ are shown in Figures \ref{fig:gassner2013skew-high-order-25}
and \ref{fig:gassner2013skew-high-order-50}, respectively. The remaining
parameters are the same as mentioned above, the only difference occurs in
the increased number ($50,000$ and $100,000$, respectively) of time steps.
Of course, very strong oscillations occur, but the method remains stable and
conservative. The plots for momentum and energy look precisely like the ones
obtained for $p = 7$ and are consequently omitted. These numerical results
confirm the proven stability and conservation results even in the case of
very high order methods and discontinuous solutions.

\begin{figure}[!hp]
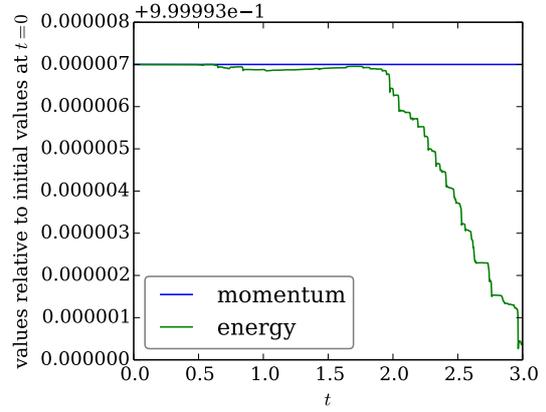
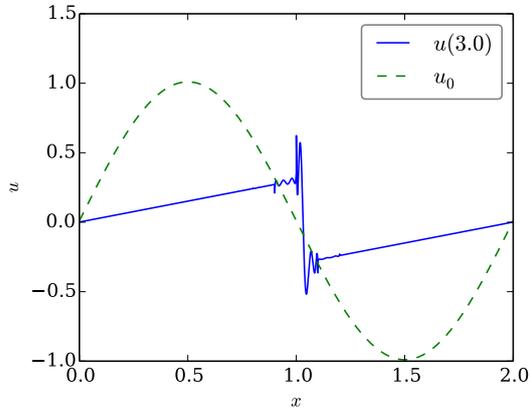
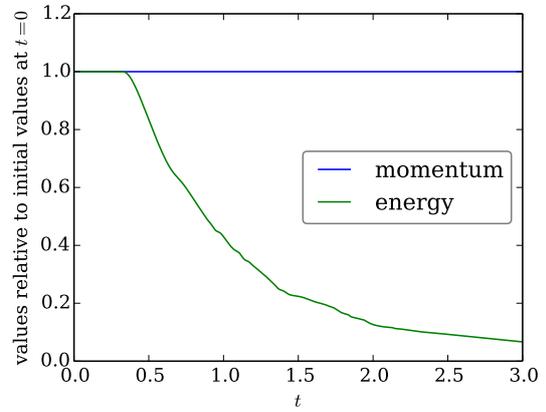
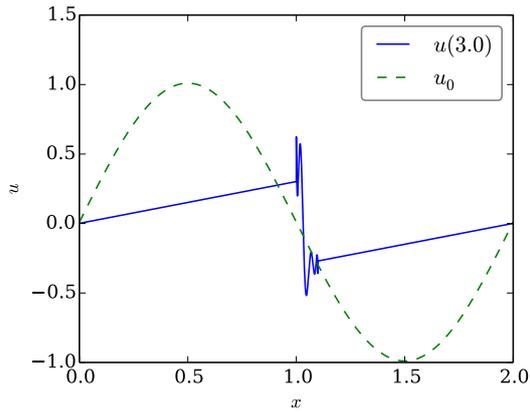
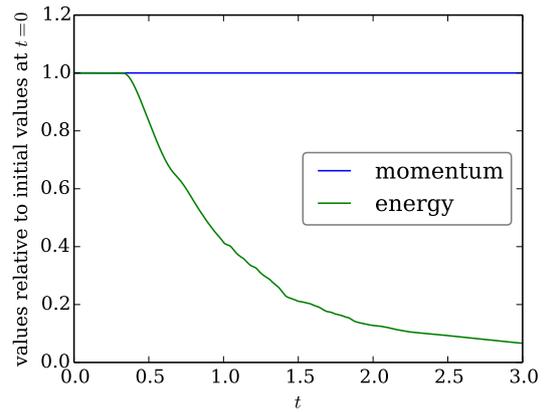

  \centering
  \begin{subfigure}[b]{0.45\textwidth}
    \includegraphics[width=\textwidth]{%
      figures/gassner2013skew/Lobatto_Legendre_7_20_ECON_1}
    \caption{Energy conservative flux.}
  \end{subfigure}%
  ~
  \begin{subfigure}[b]{0.45\textwidth}
    \includegraphics[width=\textwidth]{%
      figures/gassner2013skew/Lobatto_Legendre_7_20_ECON_2}
    \caption{Energy conservative flux.}
  \end{subfigure}%
  ~\\
  \begin{subfigure}[b]{0.45\textwidth}
    \includegraphics[width=\textwidth]{%
      figures/gassner2013skew/Lobatto_Legendre_7_20_LLF_1}
    \caption{Local Lax-Friedrichs flux.}
  \end{subfigure}%
  ~
  \begin{subfigure}[b]{0.45\textwidth}
    \includegraphics[width=\textwidth]{%
      figures/gassner2013skew/Lobatto_Legendre_7_20_LLF_2}
    \caption{Local Lax-Friedrichs flux.}
  \end{subfigure}%
  ~\\
  \begin{subfigure}[b]{0.45\textwidth}
    \includegraphics[width=\textwidth]{%
      figures/gassner2013skew/Lobatto_Legendre_7_20_Osher_1}
    \caption{Osher's flux.}
  \end{subfigure}%
  ~
  \begin{subfigure}[b]{0.45\textwidth}
    \includegraphics[width=\textwidth]{%
      figures/gassner2013skew/Lobatto_Legendre_7_20_Osher_2}
    \caption{Osher's flux.}
  \end{subfigure}%
  \caption{Results of the simulations for Burgers' equation
            using an SBP CPR method with correction term for the divergence,
            20 elements with a \emph{Lobatto-Legendre}
            basis of order 7 and variant numerical fluxes.
            On the left-hand side, the values of $u(3)$ (blue) and $u(0) = u_0$
            (green) are shown. On the right-hand site, the discrete momentum
            $\vec{1}^T \mat{M} \vec{u}$ (blue) and discrete energy
            $\vec{u}^T \mat{M} \vec{u}$ (green) in the time interval $[0, 3]$
            are plotted.}
  \label{fig:gassner2013skew-lobatto}
\end{figure}

\begin{figure}[!hp]
  \centering
  \begin{subfigure}[b]{0.45\textwidth}
    \includegraphics[width=\textwidth]{%
      figures/gassner2013skew/Gauss_Legendre_7_20_ECON_1}
    \caption{Energy conservative flux.}
  \end{subfigure}%
  ~
  \begin{subfigure}[b]{0.45\textwidth}
    \includegraphics[width=\textwidth]{%
      figures/gassner2013skew/Gauss_Legendre_7_20_ECON_2}
    \caption{Energy conservative flux.}
  \end{subfigure}%
  ~\\
  \begin{subfigure}[b]{0.45\textwidth}
    \includegraphics[width=\textwidth]{%
      figures/gassner2013skew/Gauss_Legendre_7_20_LLF_1}
    \caption{Local Lax-Friedrichs flux.}
  \end{subfigure}%
  ~
  \begin{subfigure}[b]{0.45\textwidth}
    \includegraphics[width=\textwidth]{%
      figures/gassner2013skew/Gauss_Legendre_7_20_LLF_2}
    \caption{Local Lax-Friedrichs flux.}
  \end{subfigure}%
  ~\\
  \begin{subfigure}[b]{0.45\textwidth}
    \includegraphics[width=\textwidth]{%
      figures/gassner2013skew/Gauss_Legendre_7_20_Osher_1}
    \caption{Osher's flux.}
  \end{subfigure}%
  ~
  \begin{subfigure}[b]{0.45\textwidth}
    \includegraphics[width=\textwidth]{%
      figures/gassner2013skew/Gauss_Legendre_7_20_Osher_2}
    \caption{Osher's flux.}
  \end{subfigure}%
  \caption{Results of the simulations for Burgers' equation
            using an SBP CPR method with correction terms for both the divergence
            and restriction to the boundary,
            20 elements with a \emph{Gauß-Legendre}
            basis of order 7 and variant numerical fluxes.
            On the left-hand side, the values of $u(3)$ (blue) and $u(0) = u_0$
            (green) are shown. On the right-hand site, the discrete momentum
            $\vec{1}^T \mat{M} \vec{u}$ (blue) and discrete energy
            $\vec{u}^T \mat{M} \vec{u}$ (green) in the time interval $[0, 3]$
            are plotted.}
  \label{fig:gassner2013skew-gauss-with-correction}
\end{figure}

\begin{figure}[!hp]
  \centering
  \begin{subfigure}[b]{0.45\textwidth}
    \includegraphics[width=\textwidth]{%
      figures/gassner2013skew/gauss_without_correction/Gauss_Legendre_7_20_ECON_1}
    \caption{Energy conservative flux.}
  \end{subfigure}%
  ~
  \begin{subfigure}[b]{0.45\textwidth}
    \includegraphics[width=\textwidth]{%
      figures/gassner2013skew/gauss_without_correction/Gauss_Legendre_7_20_ECON_2}
    \caption{Energy conservative flux.}
  \end{subfigure}%
  ~\\
  \begin{subfigure}[b]{0.45\textwidth}
    \includegraphics[width=\textwidth]{%
      figures/gassner2013skew/gauss_without_correction/Gauss_Legendre_7_20_LLF_1}
    \caption{Local Lax-Friedrichs flux.}
  \end{subfigure}%
  ~
  \begin{subfigure}[b]{0.45\textwidth}
    \includegraphics[width=\textwidth]{%
      figures/gassner2013skew/gauss_without_correction/Gauss_Legendre_7_20_LLF_2}
    \caption{Local Lax-Friedrichs flux.}
  \end{subfigure}%
  ~\\
  \begin{subfigure}[b]{0.45\textwidth}
    \includegraphics[width=\textwidth]{%
      figures/gassner2013skew/gauss_without_correction/Gauss_Legendre_7_20_Osher_1}
    \caption{Osher's flux.}
  \end{subfigure}%
  ~
  \begin{subfigure}[b]{0.45\textwidth}
    \includegraphics[width=\textwidth]{%
      figures/gassner2013skew/gauss_without_correction/Gauss_Legendre_7_20_Osher_2}
    \caption{Osher's flux.}
  \end{subfigure}%
  \caption{Results of the simulations for Burgers' equation
            using an SBP CPR method with correction term only for the divergence,
            20 elements with a \emph{Gauß-Legendre}
            basis of order 7 and variant numerical fluxes.
            On the left-hand side, the values of $u(3)$ (blue) and $u(0) = u_0$
            (green) are shown. On the right-hand site, the discrete momentum
            $\vec{1}^T \mat{M} \vec{u}$ (blue) and discrete energy
            $\vec{u}^T \mat{M} \vec{u}$ (green) in the time interval $[0, 3]$
            are plotted.}
  \label{fig:gassner2013skew-gauss-without-correction}
\end{figure}

\begin{figure}[!hp]
  \centering
  \begin{subfigure}[b]{0.45\textwidth}
    \includegraphics[width=\textwidth]{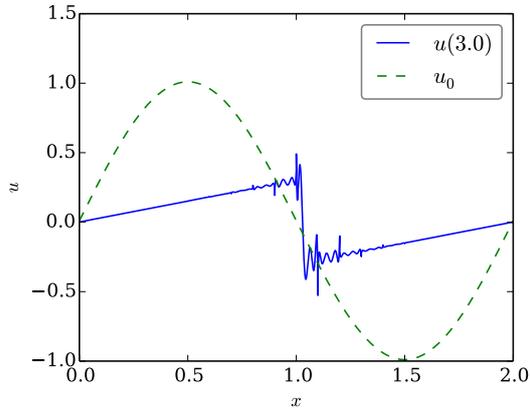}
    \caption{Gauß-Legendre with both correction terms.}
  \end{subfigure}%
  ~
  \begin{subfigure}[b]{0.45\textwidth}
    \includegraphics[width=\textwidth]{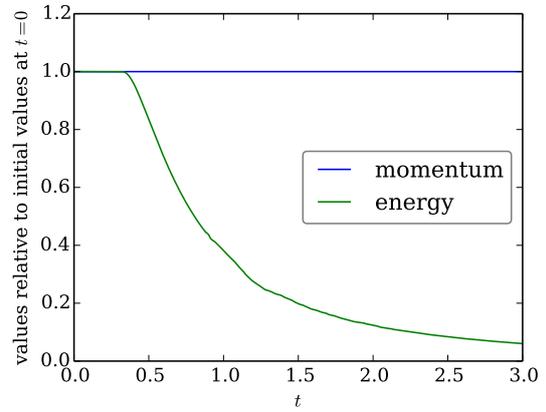}
    \caption{Gauß-Legendre with both correction terms.}
  \end{subfigure}%
  ~\\
  \begin{subfigure}[b]{0.45\textwidth}
    \includegraphics[width=\textwidth]{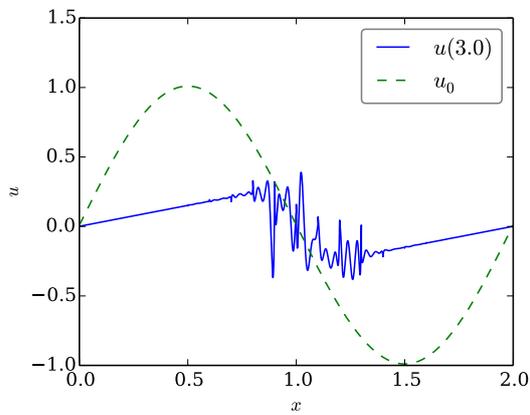}
    \caption{Gauß-Legendre with divergence correction.}
  \end{subfigure}%
  ~
  \begin{subfigure}[b]{0.45\textwidth}
    \includegraphics[width=\textwidth]{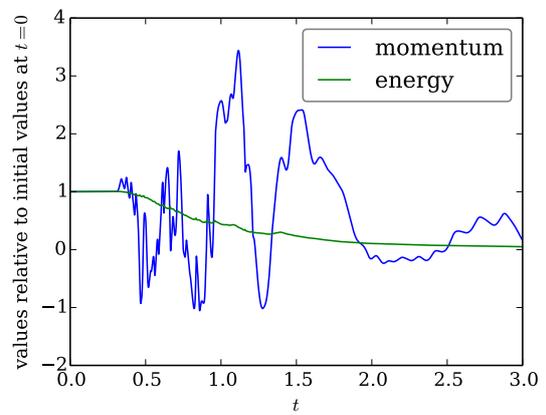}
    \caption{Gauß-Legendre with divergence correction.}
  \end{subfigure}%
  ~\\
  \begin{subfigure}[b]{0.45\textwidth}
    \includegraphics[width=\textwidth]{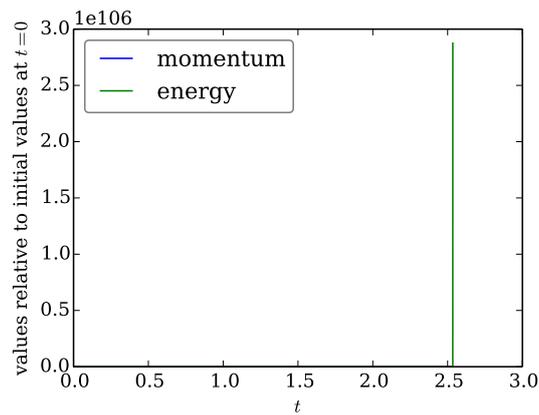}
    \caption{Lobatto-Legendre with divergence correction.}
  \end{subfigure}
  \caption{Results for Burgers' equation
            using SBP CPR methods with 20 elements, different bases of order 7
            and Roe's flux.
            In the first row, results for Gauß-Legendre nodes with
            both correction terms are shown.
            For the second row, only a divergence correction is used.
            Results for a Lobatto-Legendre basis with divergence correction are
            presented in (e).
            In (a) and (c), the values of $u(3)$ (blue) and $u(0) = u_0$
            (green) are shown. The discrete momentum
            $\vec{1}^T \mat{M} \vec{u}$ (blue) and discrete energy
            $\vec{u}^T \mat{M} \vec{u}$ (green) in the time interval $[0, 3]$
            are plotted in the other pictures.}
  \label{fig:gassner2013skew-roe}
\end{figure}

\begin{figure}[!hp]
  \centering
  \begin{subfigure}[b]{0.45\textwidth}
    \includegraphics[width=\textwidth]{%
      figures/gassner2013skew/very_high_order/Gauss_Legendre_25_20_LLF_1}
    \caption{Gauß-Legendre with LLF flux.}
  \end{subfigure}%
  ~
  \begin{subfigure}[b]{0.45\textwidth}
    \includegraphics[width=\textwidth]{%
      figures/gassner2013skew/very_high_order/Gauss_Legendre_25_20_Osher_1}
    \caption{Gauß-Legendre with Osher's flux.}
  \end{subfigure}%
  \\
  \begin{subfigure}[b]{0.45\textwidth}
    \includegraphics[width=\textwidth]{%
      figures/gassner2013skew/very_high_order/Lobatto_Legendre_25_20_LLF_1}
    \caption{Lobatto-Legendre with LLF flux.}
  \end{subfigure}%
  ~
  \begin{subfigure}[b]{0.45\textwidth}
    \includegraphics[width=\textwidth]{%
      figures/gassner2013skew/very_high_order/Lobatto_Legendre_25_20_Osher_1}
    \caption{Lobatto-Legendre with Osher's flux.}
  \end{subfigure}%
  \caption{Results of the simulations for Burgers' equation
            using SBP CPR methods with 20 elements, different bases of order 25
            and local Lax-Friedrichs (LLF) or Osher's flux (on the left- and
            right-hand side, respectively).
            In the first row, results for the Gauß-Legendre nodes with
            correction terms for both divergence and restriction are shown.
            For the second row, a Lobatto-Legendre basis with a correction for
            the divergence is used.
            Each figure shows the values of $u(3)$ (blue) and $u(0) = u_0$
            (green).}
  \label{fig:gassner2013skew-high-order-25}
\end{figure}

\begin{figure}[!hp]
  \centering
  \begin{subfigure}[b]{0.45\textwidth}
    \includegraphics[width=\textwidth]{%
      figures/gassner2013skew/very_high_order/Gauss_Legendre_50_20_LLF_1}
    \caption{Gauß-Legendre with LLF flux.}
  \end{subfigure}%
  ~
  \begin{subfigure}[b]{0.45\textwidth}
    \includegraphics[width=\textwidth]{%
      figures/gassner2013skew/very_high_order/Gauss_Legendre_50_20_Osher_1}
    \caption{Gauß-Legendre with Osher's flux.}
  \end{subfigure}%
  \\
  \begin{subfigure}[b]{0.45\textwidth}
    \includegraphics[width=\textwidth]{%
      figures/gassner2013skew/very_high_order/Lobatto_Legendre_50_20_LLF_1}
    \caption{Lobatto-Legendre with LLF flux.}
  \end{subfigure}%
  ~
  \begin{subfigure}[b]{0.45\textwidth}
    \includegraphics[width=\textwidth]{%
      figures/gassner2013skew/very_high_order/Lobatto_Legendre_50_20_Osher_1}
    \caption{Lobatto-Legendre with Osher's flux.}
  \end{subfigure}%
  \caption{Results of the simulations for Burgers' equation
            using SBP CPR methods with 20 elements, different bases of order 50
            and local Lax-Friedrichs (LLF) or Osher's flux (on the left- and
            right-hand side, respectively).
            In the first row, results for the Gauß-Legendre nodes with
            correction terms for both divergence and restriction are shown.
            For the second row, a Lobatto-Legendre basis with a correction for
            the divergence is used.
            Each figure shows the values of $u(3)$ (blue) and $u(0) = u_0$
            (green).}
  \label{fig:gassner2013skew-high-order-50}
\end{figure}

\subsection{Extension of the CPR idea}
\label{sec:extension-of-the-CPR-idea}

Extending the idea to use a different norm for proving stability does not seem
to extend to the corrected formulation of Burgers' equation, at least in a
straightforward way. Indeed, multiplying by $\vec{u}^T (\mat{M} + \mat{K})$
instead of $\vec{u}^T \mat{M}$, equation
\eqref{eq:inviscid-burgers-CPR-full-correction} becomes
\begin{equation}
\begin{aligned}
  & \frac{1}{2} \od{}{t} \norm{u}_{M+K}^2
    +\frac{1}{3} \vec{u}^T (\mat{M} + \mat{K}) \mat{D} \vec{u^2}
    + \frac{1}{3} \vec{u}^T (\mat{M} + \mat{K}) \mat{u} \mat{D} \vec{u}
\\& + \vec{u}^T (\mat{M} + \mat{K}) \mat{C} (\dots)
    = 0.
\end{aligned}
\end{equation}
The standard choice $\mat{C} = (\mat{M} + \mat{K})^{-1} \mat{R}^T \mat{B}$
leads to additional terms
\begin{equation}
  \frac{1}{3} \vec{u}^T \mat{K} \mat{D} \mat{u} \vec{u}
  + \frac{1}{3} \vec{u}^T \mat{K} \mat{u} \mat{D} \vec{u}
  =
  \frac{1}{3} \vec{u}^T ( \mat{K} \mat{D} \mat{u} + \mat{K} \mat{u} \mat{D} )
    \vec{u}
\end{equation}
in comparison with the results for $\mat{K} = 0$. Enforcing stability by
requiring $\mat{K} \mat{D} \mat{u} + \mat{K} \mat{u} \mat{D}$ to be
skew-symmetric (leading to no further contribution) or symmetric (leading to
negative contributions for the rate of change
$\frac{1}{2} \od{}{t} \norm{u}_{M+K}^2$ in the positive definite case) implies
$\mat{K} = 0$, at least for Gauß-Legendre and Lobatto-Legendre bases of small
degree. For brevity, these calculations are not repeated here.

\section{Discussion and summary }\label{Chapter_5}

In this work, the general frameworks of CPR methods and SBP SAT operators
are united, leading to a general formulation of semidiscretisations for
conservation laws. The linearly stable schemes of Vincent et al.
\cite{vincent2011newclass, vincent2015extended} are embedded in this framework,
leading to proofs for both conservation and stability in a discrete norm
adapted to the method.

Moreover, the DGSEM introduced by Gassner \cite{gassner2013skew} using a
skew-symmetric formulation of the conservation law is embedded in this framework.
Thus, nonlinear stability results for a special choice of nodal basis
(Lobatto-Legendre) can be proven. These results are extended in this work by
adapting a new formulation of the conservation law, introducing an additional
correction term. In this way, conservation and entropy stability are ensured
for more general SBP CPR methods. Especially, a Gauß-Legendre basis not
including boundary points is an admissible part of these schemes.

First considerations on the extension of the CPR idea to use different
norms in stability proofs in section \ref{sec:extension-of-the-CPR-idea} were not
successful. Therefore, SBP CPR methods should be used with the canonical
correction matrix $\mat{C} = \mat{M}^{-1} \mat{R}^T \mat{B}$ ($\kappa = 0$),
facilitating stability proofs as in sections \ref{Chapter_3} and \ref{Chapter_4}.
Thus, Gauß-Legendre bases seem to preferable, due to their higher order for
quadrature and better approximation qualities, see also Figures
\ref{fig:vincent2011newclass-p-10}(e) and \ref{fig:vincent2011newclass-N-4}(e).
However, extensions to more complex systems seem to be difficult without
relying on boundary nodes. There, Lobatto-Legendre nodes can be advantageous.

Future work will include investigations of different correction matrices
(correction functions in the framework of FR methods) ensuring entropy
stability for nonlinear conservation laws, since the straightforward extension of
the linear case seems not to be possible. Additionally, fully discrete
schemes will be investigated, incorporating the influence of different time
discretisations into the hitherto obtained results. Therefore, special attention
must be paid to the described volume term.

Of course, a straightforward application of SBP CPR schemes in multiple
dimensions using tensor products is possible. Future work will focus on
inherently multi-dimensional SBP CPR bases for both tensor product type
elements and simplices, using the new formulation of these schemes.

\bibliographystyle{abbrv}
\bibliography{literature}

\end{document}